   \def\triv{{\mathbf 1}}
\def\H{H^1_{\text{\'et}}}
\let\Frob\Phi
\def\Xi{U}
\def\Cy#1{{\rm C}_{#1}}
\def\Gbar{\bar G}
\def\nr{{\text{nr}}}
\let\iso\simeq
\begin{document}

\title{Quotients of hyperelliptic curves and \'etale cohomology}

\author{Tim Dokchitser}
\address{Department of Mathematics, University Walk, Bristol BS8 1TW, UK}
\email{tim.dokchitser@bristol.ac.uk}

\author{Vladimir Dokchitser}
\address{Mathematics Institute, University of Warwick, Coventry CV4 7AL, UK}
\email{v.dokchitser@warwick.ac.uk}

\keywords{Hyperelliptic curves, quotient curves, etale cohomology, Galois representations.}
\subjclass[2000]{11G30 (14F20, 14H37, 11G20)}

\begin{abstract}
We study hyperelliptic curves $C$ with an action of an affine group of 
automorphisms $G$.
We establish a closed form expression for the quotient curve $C/G$ and 
for the first \'etale cohomology group of $C$ as a representation of $G$. 
The motivation comes from the arithmetic of hyperelliptic curves over local fields, 
specifically
their local Galois representations and the associated invariants.
\end{abstract}

\maketitle

\tableofcontents


\section{Introduction}
\label{sintro}



This paper studies hyperelliptic curves $C/k$ 
with a group of affine automorphisms $G$. 
We show that both the equation of the quotient curve $C/G$ and
the $G$-representation $\H(C_{\bar k},\Q_l)$
admit simple 
closed form descriptions in terms of the defining equation of $C$. The application 
we have in mind is to Galois representations of hyperelliptic curves over local 
fields~\cite{hyp}.

Throughout the paper $C$ will be a hyperelliptic curve over a 
field~$k$ with $\vchar k\ne 2$,
$$
  C: Y^2 = f(X)
$$
with $f(X)\!\in\!k[X]$ a squarefree polynomial of degree $\ge\!1$ that factors over~$\bar k$~as
$$
  f(X)=c \prod_{r\in R}(X-r), \qquad R\subset\bar k.
$$
%
Let $G\subset \Aut_k C$ be an affine group of automorphisms. Thus $g\in G$ acts as
$$
  g(X) = \alpha(g) X+\beta(g), \qquad g(Y)=\gamma(g)\,Y 
$$
for some $\alpha(g), \beta(g), \gamma(g)\in k$.
%
%
%
%
In particular, $G$ acts naturally on the set of roots $R$ through the $X$-coordinate.
The punchline is that both the quotient curve $C/G$ and the 
\'etale cohomology group $\H(C_{\bar k},\Q_l)$ with $G$-action can  
be very simply expressed in terms of $\gamma$ and the $G$-action 
on $R$.
Explicitly, writing $R/G$ for the set of orbits of $G$ on $R$, and $\C[R]$ for the 
permutation representation, we prove:

\begin{theorem}[Quotient curve]
\label{iappmain}
\noindent\par\noindent
\begin{enumerate}
\item
If $G$ contains the hyperelliptic involution,
then $C/G\iso \P^1_k$ .
\item 
  If $G$ acts trivially on the $Y$-coordinate, then
  $$
    C/G:\>\>y^2 = c\,(-1)^{|R|-|R/G|} \>
     {\prod_{O\in R/G}}
      (x-{\prod_{r\in O} r}).
  $$
\vspace{-10pt}
\item
  Otherwise, 
  $$
    C/G:\>\>y^2 = c\>(-1)^{(m-1)(|R/G|-1)}\>(x-\mu) \prod_{O\in R/G\atop O\ne\Xi} 
      (x-\prod_{r\in O} r),
  $$
\end{enumerate}
where%
\footnote{If $\vchar k\!=\!0$, then $|G|\!=\!m$, $|U|\!=\!1$; 
if $\vchar k\!=\!p>0$, then $|G|\!=\!m p^j$, $|U|\!=\!p^j$ and $p\!\nmid\!m$.}
$\Xi$ is the set of $a\in k$
that are fixed points of some non-trivial $g\in G$ acting on $k$ through 
the $X$-coordinate,
$m=|G|/|\Xi|$ and $\mu=\prod_{a\in \Xi}a^m$.
\end{theorem}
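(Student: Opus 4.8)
The plan is to realise $C/G$ as the smooth projective model of the fixed field $k(C)^{G}$, where $k(C)=k(X)[Y]/(Y^{2}-f(X))$, and to compute this field directly. Part (1) is immediate: if $G$ contains the hyperelliptic involution $\iota$, then $k(C)^{G}\subseteq k(C)^{\langle\iota\rangle}=k(X)$, a subfield of $k(X)$ containing $k$ of transcendence degree one, so by Lüroth $k(C)^{G}=k(t)$ and hence $C/G\iso\P^{1}_{k}$. For parts (2)--(3) I first record the relevant structure. Since $\langle\iota\rangle=\ker(\Aut_{k}C\to\Aut\P^{1}_{X})$, once $\iota\notin G$ the group $G$ acts faithfully by affine maps on $\mathbb{A}^{1}_{X}$; write $\alpha\colon G\to k^{*}$ for $g\mapsto\alpha(g)$. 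Then $W:=\ker\alpha$ is the subgroup of translations (an $\mathbb{F}_{p}$-vector space if $\vchar k=p$, trivial if $\vchar k=0$) and the normal $p$-Sylow of $G$; $\alpha(G)$ is cyclic of order $m$, so $\mu_{m}\subseteq k$; and $\Xi$ is a single $W$-orbit, so $|\Xi|=|W|$, as recorded in the footnote. Put $P:=\prod_{a\in\Xi}(X-a)$; a direct substitution gives $g\cdot P=\alpha(g)^{|\Xi|}P$, and since $\gcd(|\Xi|,m)=1$ the polynomial $u:=P^{m}$ is $G$-invariant of degree $|G|$, whence $k(X)^{G}=k(u)$ by a degree count. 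I then record two consequences. First, for a \emph{free} $G$-orbit $O\subset\bar k$, the polynomial $\prod_{r\in O}(X-r)$ is $G$-invariant of $X$-degree $|G|$, hence equals $u-u(r_{O})$ (the value of $u$ on $O$); comparing constant terms with $u(0)=P(0)^{m}=(-1)^{m}\mu$ — valid because $\vchar k\ne2$ makes $|\Xi|$ odd — gives $u(r_{O})=(-1)^{m}(\mu-\prod_{r\in O}r)$. Second, a $G$-orbit is non-free iff it meets $\Xi$, and then it equals $\Xi$, so $\Xi$ is the only orbit on $R$ that can fail to be free.

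For part (2), where $\gamma\equiv1$: from $f(g(X))=\alpha(g)^{|R|}f(X)$ and $\gamma^{2}=\alpha^{|R|}=1$ we get that $f$ is $G$-invariant, so $f\in k(X)^{G}\cap k[X]=k[u]$; moreover $\Xi\not\subseteq R$ when $m\ge2$ (otherwise the tame stabiliser of order $m$ of a Weierstrass point $(a,0)$ with $a\in\Xi$ would act trivially on its cotangent space, since $\gamma\equiv1$ — impossible). Hence every orbit on $R$ is free and $f=c\prod_{O\in R/G}(u-u(r_{O}))$. Since $Y$ is $G$-invariant, $k(C)^{G}=k(u)(Y)$, so $C/G\colon Y^{2}=f$, and the affine substitution $x=(-1)^{m+1}u+\mu$, for which $u-u(r_{O})=(-1)^{m+1}(x-\prod_{r\in O}r)$, produces the stated equation; the exponent of $-1$ comes out to $|R|-|R/G|$ via $|R|=|R/G|\cdot|G|\equiv m|R/G|\pmod2$. (The case $m=1$, where $G$ is a group of translations and $\gamma\equiv1$ automatically, is handled analogously with $u=\prod_{t}(X-t)$ over the translation lattice.)

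For part (3), where $\gamma\not\equiv1$ (so $m\ge2$): $\gamma$ is trivial on $W=\ker\alpha$ (as $\gamma^{2}=\alpha^{|R|}$ kills $W$ and $|W|$ is odd), hence factors through $G/W$; and $\gamma^{2}=\alpha^{|R|}$. Since $\mu_{m}\subseteq k$, the group $\widehat{G/W}$ is cyclic of order $m$, generated by the character $\alpha^{|\Xi|}$ of $P$, so there is $k_{0}\in\{1,\dots,m-1\}$ with $w:=P^{k_{0}}$ a relative invariant of character $\gamma^{-1}$. Then $z:=Yw$ is $G$-invariant, $k(C)^{G}=k(u)(z)$ by a degree count, and $z^{2}=f\cdot P^{2k_{0}}$ is a $G$-invariant polynomial, hence lies in $k[u]$: its free-orbit factors contribute $u-u(r_{O})$ for $O\ne\Xi$, while the $\Xi$-contribution — $P^{2k_{0}}$ if $\Xi\not\subseteq R$, and $P^{2k_{0}+1}$ if $\Xi\subseteq R$ — equals $u$. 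Indeed, if $\Xi\not\subseteq R$ then $\gamma^{2}=1$ (all orbits on $R$ are free), which forces $m$ even and $2k_{0}=m$; if $\Xi\subseteq R$ then $\gamma^{2}=\alpha^{|\Xi|}$ generates $\widehat{G/W}$, so $\gamma$ does too, which forces $m$ odd and $2k_{0}+1=m$; either way this power of $P$ is $P^{m}=u$. Hence $z^{2}=c\,u\prod_{O\in R/G,\,O\ne\Xi}(u-u(r_{O}))$, and the substitution $x=(-1)^{m+1}u+\mu$, together with the parity identities and the fact that $m$ is odd when $\Xi\subseteq R$, gives the equation of part (3), with exponent $(m-1)(|R/G|-1)$.

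I expect the main obstacle to be the positive-characteristic book-keeping concentrated in the setup and in part (3): identifying $\Xi$ as a single $W$-orbit so that $|\Xi|=|W|$, establishing $k(X)^{G}=k(u)$, producing the relative invariant $w$ of the right character, and — the decisive point — verifying that the $\Xi$-contribution to $z^{2}$ is exactly one factor of $u$, the step that forces $m$ to be odd precisely when $\Xi\subseteq R$. The remaining sign manipulations are routine but delicate parity computations.
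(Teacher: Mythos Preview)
Your argument is correct and produces the same models as the paper (your $u=P^m$ and $x=(-1)^{m+1}u+\mu$ coincide with the paper's $I$, and your $z=P^{k_0}Y$ is exactly the paper's $y=(I_T-\lambda)^{\lfloor m/2\rfloor}Y$, since $P=I_T-\lambda$ and $k_0=\lfloor m/2\rfloor$ in both sub-cases). The route, however, is genuinely different in two places. First, for part~(3) the paper proceeds in two stages, computing $C/T$ explicitly and then applying the $|T|=1$ case to $(C/T)/C_m$, with separate treatments for $m$ odd and $m$ even; you instead work directly over the full group, choosing the relative invariant $w=P^{k_0}$ of character $\gamma^{-1}$ and reading off $k_0$ from $\gamma^2$. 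Your approach is more uniform and avoids the intermediate quotient, at the cost of a small character computation; the paper's staged approach is more modular and makes the intermediate curve $C/T$ visible. Second, to exclude a non-free orbit on $R$ in case~(2), the paper uses the combinatorial classification of $G$-orbits (its Proposition on the structure of $R$), whereas you give a local argument at a Weierstrass point via the cotangent action---a nice geometric alternative. One minor point: your identification $2k_0=m$ (resp.\ $2k_0+1=m$) follows cleanly from squaring the defining relation $\alpha^{k_0|\Xi|}=\gamma^{-1}$, using $\gcd(|\Xi|,m)=1$; it implicitly also uses $|\Xi|\equiv 1\pmod m$ (equivalently $|T|\equiv 1\pmod m$), which you might state, though your argument as written does not actually need it. The $m=1$ parenthetical in part~(2) is a little loose notationally (the correct generator is $I_T=\prod_{g\in T}g(X)$ rather than a product over a ``translation lattice''), but the intended argument is clear and correct.
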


\begin{theorem}[\'Etale cohomology]
\label{iabmain}
For every prime $l\ne\vchar k$, and every embedding $\Q_l\injects\C$,
$$
  \H(C_{\bar k},\Q_l)\tensor\C \>\>\iso\>\> V\ominus\epsilon 
$$
as a complex representation of $G$, where
$$
  V = \tilde\gamma \otimes (\C[R] \ominus \triv), \qquad 
  \epsilon = \bigleftchoice{0}{\text{if $|R|$ is odd,}}{\det V}{\text{if $|R|$ is even,}}
$$
and $\tilde\gamma: G\to \C^\times$ is any one-dimensional representation 
with $\ker\tilde\gamma=\ker\gamma$.
The representation $\triv\oplus\epsilon$ is the permutation action 
of $G$ on the points at infinity of $C$.
\end{theorem}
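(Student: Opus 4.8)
The plan is to prove the stated isomorphism by comparing characters: both $\H(C_{\bar k},\Q_l)\tensor\C$ and $V\ominus\epsilon$ are honest finite-dimensional complex representations of the finite group $G$, so it suffices to show that the trace of every $g\in G$ agrees on the two sides. (Independence of $l$ and of the embedding $\Q_l\injects\C$ then comes for free, as these traces will turn out to be rational integers.) The element $g=1$ is the equality of dimensions: $C$ has genus $\lfloor(|R|-1)/2\rfloor$, so $\dim\H(C_{\bar k},\Q_l)$ is $|R|-1$ if $|R|$ is odd and $|R|-2$ if $|R|$ is even, which matches $\dim V-\dim\epsilon=(|R|-1)-\dim\epsilon$ with $\dim\epsilon\in\{0,1\}$ according to the parity of $|R|$. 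For $g\ne1$ I would invoke the Lefschetz trace formula on the smooth projective curve $C_{\bar k}$; since $g$ has degree one it acts trivially on $H^0$ and on $H^2\iso\Q_l(-1)$, so
$$\mathrm{tr}\bigl(g\mid\H(C_{\bar k},\Q_l)\bigr)=2-\nu(g),$$
where $\nu(g)$ is the number of fixed points of $g$ on $C_{\bar k}$, counted with intersection multiplicity. The whole problem thus reduces to computing $\nu(g)$ and comparing it with $2-\chi_{V\ominus\epsilon}(g)$.

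To compute $\nu(g)$ I would work with the $G$-equivariant degree-two map $\pi\colon C\to\P^1$ given by the $X$-coordinate, $G$ acting on $\P^1$ through $\bar g\colon x\mapsto\alpha(g)x+\beta(g)$. Every fixed point of $g$ lies over a fixed point of $\bar g$, of which there are the affine point $x_0=\beta(g)/(1-\alpha(g))$ and $\infty$ if $\alpha(g)\ne1$; only $\infty$, with multiplicity $2$, if $\alpha(g)=1\ne\beta(g)$ (possible only when $\vchar k=p>0$); and all of $\P^1$ if $\alpha(g)=1=\beta(g)$, which for $g\ne1$ forces $\gamma(g)=-1$ and $g=\iota$, the hyperelliptic involution. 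Over each such point the local contribution is read off from the action of $g$ on the one or two points of $C$ above it together with the order of vanishing of $g$ in a local parameter. The salient computations are: above $x_0\notin R$ one necessarily has $\gamma(g)\in\{\pm1\}$, and $g$ fixes both points above $x_0$ (contributing $2$) when $\gamma(g)=1$, swapping them (contributing $0$) when $\gamma(g)=-1$; above a branch point the unique point of $C$ is always fixed, with multiplicity $1$ except at a fixed point of a unipotent $g$, where one gets multiplicity $2$ (at an unramified $\infty$, the case $|R|$ even) or $3$ (at a ramified $\infty$, the case $|R|$ odd) — verifying these last values uses that a unipotent element can exist only when $p\mid|R|$, which makes the relevant coefficient in the local expansion nonzero; and above an unramified $\infty$ (again $|R|$ even) the element $g$ fixes both points exactly when $\gamma(g)\alpha(g)^{-|R|/2}=1$ (this quantity being $\pm1$, as its square is $\gamma(g)^2/\alpha(g)^{|R|}=1$). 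Assembling these pieces yields a closed formula for $\nu(g)$ in terms of $\gamma(g)$, $\alpha(g)$, the parity of $|R|$, and $c_R(g)=|R^g|$, the number of roots fixed by $g$ (which is $0$ or $1$ unless $g\in\{1,\iota\}$).

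Finally I would match $2-\nu(g)$ against $\chi_{V\ominus\epsilon}(g)=\tilde\gamma(g)\bigl(c_R(g)-1\bigr)-\epsilon(g)$. When $c_R(g)=1$ the term $\tilde\gamma(g)(c_R(g)-1)$ drops out, and the fixed-point count makes $2-\nu(g)$ equal $-\epsilon(g)$; when $c_R(g)=0$ one has $\gamma(g)=\pm1$, and since $\tilde\gamma$ agrees with $\gamma$ on elements of order $\le2$ the comparison is again direct — with the sole exception of the case $|R|$ even, where one must prove that $\det V$, namely the character $g\mapsto\tilde\gamma(g)^{|R|-1}\,\mathrm{sgn}_R(g)$, coincides with the character $g\mapsto\gamma(g)\alpha(g)^{-|R|/2}$ by which $G$ permutes the two points of $C$ at infinity. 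This last point is exactly the final assertion of the theorem, that $\triv\oplus\epsilon$ is the permutation representation of $G$ on the points at infinity. I expect it, together with the sign-bookkeeping needed to handle characteristics $0$ and $p$ uniformly, to be the main obstacle; it should reduce to a careful but elementary computation, using that every nontrivial $\bar g$-orbit on $R$ has length $\mathrm{ord}\,\alpha(g)$, a divisor of $|R|-c_R(g)$, together with the relation $\gamma(g)^2=\alpha(g)^{|R|}$ forced by $g$ being an automorphism of $C$.
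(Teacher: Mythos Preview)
Your approach is correct and is genuinely different from the paper's. The paper does not compute $\mathrm{tr}(g\mid H^1)$ element by element; instead it characterises $\H(C_{\bar k},\Q_l)$ as the unique rational-character representation with $\dim V^H=2\genus(C/H)$ for all $H<G$ (Theorem~\ref{abreconstr}), reads off $\genus(C/H)$ from the explicit quotient-curve formula (Corollary~\ref{genuscor}), and then proves a purely representation-theoretic statement (Theorem~\ref{abrep}) that $V\ominus\epsilon$ has those same invariants. In fact the paper's outline explicitly mentions your route --- the Weil--Serre/Lefschetz fixed-point approach --- as a viable alternative that it chose not to pursue.

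What each buys: the paper's method ties the \'etale cohomology computation to the quotient-curve formula of Theorem~\ref{appmain}, which is of independent interest, and its key reconstruction lemma (Theorem~\ref{abreconstr}) works for any cohomology theory with the same invariants property. Your method is more direct and avoids computing $C/H$, at the price of the wild-ramification multiplicity calculations at $\infty$ (your multiplicities $2$ and $3$ are correct, and your remark that $p\mid|R|$ is exactly what makes the relevant coefficient nonzero). Both approaches converge on the same residual obstacle: identifying $\det V$ with the sign character on the points at infinity when $|R|$ is even, which the paper handles as Lemma~\ref{EGA} together with Proposition~\ref{autC}(6). One small imprecision: your phrase ``$\tilde\gamma$ agrees with $\gamma$ on elements of order $\le 2$'' should really be ``on elements $g$ with $\gamma(g)^2=1$''; the element $g$ itself need not have order $\le 2$, but the argument you intend (that $\ker\tilde\gamma=\ker\gamma$ forces $\tilde\gamma(g)=\gamma(g)$ whenever the latter is $\pm1$) is fine.
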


The assumptions $\vchar k\ne 2$ and $l\ne \vchar k$ are certainly necessary.
When $\vchar k=2$, the equations for $C$ and the $G$-action need to be adjusted,
and the set of roots $R$ will no longer be the right object. 
Moreover, the actual answer has to be different: 
for example if $C/\bar\F_2$ is an elliptic curve with $G=Q_8$ or $G=\SL_2(\F_3)$, then
$\H(C_{\bar k},\Q_l)$ is not a 1-dimensional twist of a representation realisable over $\Q$.
Similarly, $\H(C_{\bar k},\Q_l)$ behaves differently when $l\!=\!\vchar k$: its
dimension drops, and it is not necessarily a rationally traced representation 
(e.g. for an ordinary elliptic curve over $\F_p$ with \hbox{$G\!=\!C_4$}).


\smallskip

Our motivation comes from the arithmetic of hyperelliptic curves over local fields, specifically
their $\H$ as a Galois representation and the associated invariants: the conductor, 
local polynomial, root number etc. Suppose for simplicity that $C/K$ is such a curve, 
and that it acquires good reduction over a finite Galois extension $F/K$. 
The inertia group $G=I_{F/K}$ and Frobenius element $\phi\in\GKK$ act naturally 
on the reduced curve $\bar C/k$, and $G$ acts as a group of affine automorphisms.
It turns out that $\H(C_{\bar K},\Q_l)$ as a $\GKK$-module is 
the same as $\H(\bar C_{\bar k},\Q_l)$ with its $G$- and $\phi$-action, 
and Theorem \ref{iabmain} describes the $G$-action explicitly. 
To complement this, 
we need to describe the action of Frobenius-like maps on the quotient curve $C/G$:

\bigskip


\begin{theorem}
\label{iappfrob}
Suppose $\vchar k=p>2$, $G$ does not
contain the hyperelliptic involution, and
$\Frob: C\to C$ is a 
morphism of the form 
$$ 
  \Frob\,X=aX^q+b,\quad \Frob\,Y=d\>Y^q  \qquad \qquad\text{($q$ power of $p$)}
$$
that normalises\footnote{meaning $\Phi G=G\Phi$ as sets} $G$.
Then $\Frob$ descends to a morphism \hbox{$\Psi\!:\! C/G\!\to\! C/G$}
given~by
$$
\begin{array}{llllll}
  \Psi\,x &=& \displaystyle a^{|G|} x^q + \prod_{g\in G} (\alpha(g)b+\beta(g)),\cr
  \Psi\,y &=& \displaystyle \bigleftchoice
     {d\>y^q}{\text{if $\gamma=\triv$,}}
     {a^{\lfloor m/2\rfloor |G|/m}\> d\>y^q}{\text{if $\gamma\ne\triv$,}}
\end{array}
$$
where $m$ is the prime-to-$p$ part of $|G|$, and the model for $C/G$ is that 
of Theorem \ref{iappmain}.
\end{theorem}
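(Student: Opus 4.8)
The plan is to track the morphism $\Frob$ through the explicit model for $C/G$ produced in Theorem \ref{iappmain}. Since $\Frob$ normalises $G$, it induces a well-defined map $\Psi$ on $C/G$, and by Theorem \ref{iappmain}(2)--(3) the coordinate ring of $C/G$ is generated by the functions $x$ and $y$, where $x$ is a product over a $G$-orbit (or the special value $\mu$), and $y$ is a suitable $G$-invariant multiple of $Y$. So it suffices to compute $\Frob^*x$ and $\Frob^*y$ in terms of $x$ and $y$.

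First I would handle the $x$-coordinate. On $C$, the function $X$ pulls back under $\Frob$ to $aX^q+b$, so for a $G$-orbit $O\subset R$ the invariant function $\prod_{r\in O}(X-r)$ — whose value defines the $x$-coordinate of $C/G$ — transforms in a controlled way. The key identity is that for the specific invariant used to build the model, namely $x=\prod_{r\in O}\frac{X-r}{\text{(normalisation)}}$, applying $\Frob$ and using that $g(X)=\alpha(g)X+\beta(g)$ commutes appropriately with $\Frob$ up to the $q$-power Frobenius, one gets $\Psi^*x = a^{|G|}x^q + (\text{constant})$. The constant is pinned down by evaluating at a convenient point (e.g. looking at the behaviour near infinity, or specialising $X$); I expect it to come out as $\prod_{g\in G}(\alpha(g)b+\beta(g))$, which is exactly the product, over the group, of where the "new" constant $b$ lands after applying each element's linear action. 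One should double-check the degenerate/exceptional orbit $U$ and the value $\mu$ separately, verifying that $\mu$ is sent to $a^{|G|}\mu^q + (\text{same constant})$ — this is really a statement that the set $\Xi$ of fixed points, hence $\mu$, is respected by the normalising map $\Frob$.

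Next the $y$-coordinate. Here $Y\mapsto dY^q$, and on $C/G$ the function $y$ is (a scalar times) a $G$-invariant built from $Y$; when $\gamma=\triv$ the function $Y$ itself descends and one reads off $\Psi^*y=dy^q$ directly. When $\gamma\ne\triv$, the descended function $y$ equals $Y$ times a product of $(X-r)$'s over a set of size related to $\lfloor m/2\rfloor$ (this is where the factor $(x-\mu)$ and the truncated product in Theorem \ref{iappmain}(3) come from), so $\Psi^*y$ picks up an extra factor of $a$ raised to that many powers, times the number of orbits collapsed, giving the exponent $\lfloor m/2\rfloor\,|G|/m$. I would extract this exponent carefully from the precise form of the invariant $y$ in the proof of Theorem \ref{iappmain}(3), matching the count of linear factors. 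Finally one checks that $(\Psi^*x,\Psi^*y)$ satisfies the defining equation of $C/G$ — this is automatic since $\Frob$ preserves $C$ and $\Psi$ is induced — and that $\Psi$ is the unique such descent, which follows because $C/G\to C/G$ is determined on the function field, itself determined by $x$ and $y$.

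The main obstacle will be bookkeeping the $y$-coordinate exponent in the $\gamma\ne\triv$ case: one has to be precise about exactly which product of linear factors the invariant function $y$ carries in the model of Theorem \ref{iappmain}(3), including the role of the special point $\mu$ and the orbit $U$, and then count how the $q$-power Frobenius combined with the $a X^q$ scaling distributes across those $\approx m/2$ factors per the $|G|/m$ copies coming from the wild part. The parity issues hidden in $\lfloor m/2\rfloor$ (depending on whether $m$ is even or odd, equivalently on whether the hyperelliptic-type involution behaviour forces an extra factor) are exactly the delicate point, and I would verify the formula on small cases — $G$ cyclic of order $m$ acting by $X\mapsto\zeta X$, $Y\mapsto \pm Y$ — before trusting the general count. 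The $x$-coordinate computation and the normalisation of the constant term I expect to be routine once the invariant theory from the proof of Theorem \ref{iappmain} is in hand.
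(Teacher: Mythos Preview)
Your overall architecture is right—descend $\Frob$ to $k(C)^G$ and compute it on the explicit generators $x,y$ from Theorem~\ref{appmain}—but there is a concrete misidentification and a missing idea.

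First, the $x$-coordinate on $C/G$ is \emph{not} a product $\prod_{r\in O}(X-r)$ over an orbit of roots; it is $x=I(X)=\prod_{g\in G}g(X)=\prod_{g\in G}(\alpha(g)X+\beta(g))$ (Theorem~\ref{appmain} and Lemma~\ref{autA1}(5)). The orbit products $\prod_{r\in O}(X-r)$ are, up to sign, $I-\prod_{r\in O}r$ (Lemma~\ref{autA1}(6)); they give the \emph{roots} of the quotient equation, not the coordinate. With the wrong description of $x$ your plan to ``check the degenerate orbit $U$ and the value $\mu$ separately'' becomes unnecessary: there is a single function $x=I$ to transform, and the form of the quotient equation plays no role in computing $\Psi x$.

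Second, and more seriously, the actual mechanism that makes both computations work is absent from your proposal. The paper does not track linear factors or evaluate at points. Instead it uses two ingredients: (i) over $\bar k$, $\Frob\cdot h(X)=\bigl(h^{(1/q)}(\sqrt[q]{a}\,X+\sqrt[q]{b})\bigr)^q$, so $\Frob\cdot I$ and $\Frob\cdot(I_T-\lambda)$ are $q$th powers, and their unique $q$th roots inherit $G$-invariance (respectively $G$-invariance up to scalars); (ii) the \emph{uniqueness} statements of Lemma~\ref{autA1}(5),(7), which say that $I$ is the only $G$-invariant polynomial of degree $|G|$ with its leading and constant terms, and $I_T-\lambda$ the only monic polynomial of degree $|G|/m$ that is $G$-invariant up to scalars. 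Together these force $I^{(1/q)}(\sqrt[q]{a}\,X+\sqrt[q]{b})=uI+v$ and $\Frob\cdot(I_T-\lambda)=\text{(const)}\cdot(I_T-\lambda)^q$, after which one reads off $u,v$ and the constant from leading and constant coefficients (Frobenius then turns $uI+v$ into $u^qI^q+v^q$). Your ``count of linear factors'' does not obviously produce the identity $\Frob\cdot(I_T-\lambda)=a^{|G|/m}(I_T-\lambda)^q$, which is exactly what yields the exponent $\lfloor m/2\rfloor\,|G|/m$ in one line; without the uniqueness argument you would be left trying to match two polynomials of degree $q|G|/m$ term by term.
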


\noindent
See \S\ref{sex} for an example of how this theorem can be used over local fields to
determine the local Galois representation of a hyperelliptic curve
(and \cite{hyp} for the general theory).

\subsection*{Outline}

Section \ref{squo} concerns affine groups of automorphisms of hyperelliptic curves, 
the permutation action on the set of roots $R$ and the character $\gamma$. 
The main theorem here is Theorem \ref{appmain}, which is Theorem \ref{iappmain} 
with an explicit description of the quotient map.
Section \ref{setalequo} proves some general facts about $\H$ for quotient curves, 
in particular showing that $\H(C_{\bar k},\Q_l)$ is the unique representation $V$ of $G$ with rational character 
for which $\dim V^H\!=\!2\genus(C/H)$ for every subgroup $H\!<\!G$. 
Then \hbox{Section \ref{setalehyp}} applies 
this to hyperelliptic curves, proving Theorem \ref{iabmain} (=\ref{abmain}),
with a slightly cumbersome representation-theoretic computation.
An alternative approach would be to use the Weil--Serre description of the $G$-action on 
\'etale cohomology in terms of an equivariant Riemann--Hurwitz formula 
\cite{Weil48},
\cite[Ch.~VI,~\S4]{SerLo},
but that seems to be an equally long computation.
Section~\ref{sfrob} proves Theorem \ref{iappfrob} (=\ref{appfrob}). 

\begin{remark}
To obtain an explicit description for $\H(C_{\bar k},\Q_l)$, we only use that it is the unique 
representation $V$ of $G$ with rational character
for which $\dim V^H=2\genus(C/H)$ for every subgroup $H<G$ (Theorem \ref{abreconstr}).
Thus Theorem \ref{iabmain} would also hold for any other cohomology theory 
with these properties.
\end{remark}

\subsection*{Notation}
Throughout the paper, we use the following notation.

\begin{tabular}{llllll}
$C/k$ & hyperelliptic curve 
$Y^2 = c \prod_{r\in R}(X-r)$ with the right-hand\cr 
&side in $k[x]$, $R\subset\bar k$, $\vchar k\ne 2$;
genus $\le 1$ is allowed. \cr
$G$ & a finite group of affine automorphisms of $C/k$, acting by\cr
& $g(X) = \alpha(g) X+\beta(g),\>\> g(Y)=\gamma(g)\,Y$ for $g\in G$.\cr
$H^1(C)$ & $=\H(C_{\bar k},\Q_l)\otimes\C$ (choosing some embedding $\Q_l \injects \C$).\cr
$\triv, \oplus, \ominus$ & trivial representation, direct sum and direct difference.\cr
$\lfloor\cdot\rfloor$ & the floor function.\cr
$\zeta_m$ & primitive $m$th root of unity.\cr
\end{tabular}

\section{Quotients of hyperelliptic curves}
\label{squo}

We begin with affine actions and invariant functions on $\A^1$,
and then move to hyperelliptic curves.

\begin{lemma}
\label{autA1}
Let $k$ be a field, and $G\subset\Aut \A^1_k$ a finite group of affine linear transformations
$$
  X\mapsto g(X) = \alpha(g) X+\beta(g).
$$
Then
\begin{enumerate}
\item[(1)]
$G\!\iso\!T\!\rtimes\!\Cy m$, where $T$ is the subgroup of translations. 
If $\vchar k\!=\!0$, then $T$ is trivial.
If $\vchar k\!=\!p\!>\!0$, then $p\!\nmid\! m$, and $T$ is an elementary abelian $p$-group.
Moreover, $T$ is an $\F_p(\zeta_m)$-vector space, with
$\Cy m$ acting on $T$ by multiplication by $m$th roots of unity.
%
\item[(2)]
Every $g\in G\setminus T$ has a unique fixed point $a\in \bar k$; moreover, $a\in k$.
\item[(3)]
If $m=1$, then 
every $G$-orbit on $\A^1(\bar k)$ is regular%
\footnote{Recall that an action of a finite group $G$ on a set $X$ is \emph{regular} if 
$X\iso G$ as a $G$-set, equivalently the action is transitive and non-trivial elements of $G$
have no fixed points.}.
\item[(4)]
If $m\ne 1$, then there is a unique non-regular $G$-orbit on $\A^1(\bar k)$. 
It is a regular $T$-orbit, and consists of the fixed points of all $g\in G\backslash T$. 
\item[(5)]
The field of invariant rational functions $k(X)^G$ is generated by
$$
  I = \prod_{g\in G} (\alpha(g)X+\beta(g)),
$$
the unique $G$-invariant polynomial of degree $|G|$ with constant coefficient 0
and leading coefficient $(-1)^{|G|-1}$.
\item[(6)]
If $O\subset \bar k=\A^1(\bar k)$ is a regular $G$-orbit, then
$$
  \prod_{r\in O}(X-r) = (-1)^{|G|-1}\bigl(I-\prod_{r\in O} r\bigr).
$$
\item[(7)]
If $U\subset \bar k=\A^1(\bar k)$ is a non-regular $G$-orbit, 
let $I_T=\prod_{g\in T} g(X)$ and $\lambda=\prod_{r\in U}r$.
Then
$$
  \prod_{r\in U}(X-r) = I_T- \lambda,
$$
the unique monic 
polynomial of degree $|G|/m$ that is $G$-invariant up to scalars.
\end{enumerate}
\end{lemma}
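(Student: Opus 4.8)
The plan is to prove the seven parts of Lemma~\ref{autA1} roughly in the order stated, since the later parts build on the earlier structural results.

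\textbf{Parts (1) and (2).} First I would identify the translations $T = \ker\alpha$, so that $\alpha$ gives an exact sequence $1\to T\to G\to \alpha(G)\to 1$. For the splitting, I would look for a fixed point: for any $g\in G\setminus T$ the equation $\alpha(g)a+\beta(g)=a$ has the unique solution $a=\beta(g)/(1-\alpha(g))\in k$ since $\alpha(g)\ne 1$; conjugating by the translation $X\mapsto X+a$ moves this fixed point to $0$, so WLOG some complement $\Cy m$ of $T$ fixes $0$ and consists of pure scalings $X\mapsto\alpha(g)X$. This simultaneously proves (2), gives $G\iso T\rtimes\Cy m$, and shows $\Cy m\iso\alpha(G)\subset k^\times$ is cyclic of some order $m$. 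In characteristic $0$ a nontrivial translation has infinite order, so $T=1$. In characteristic $p$, $T$ is a finite subgroup of $(k,+)$, hence elementary abelian $p$-group, and the conjugation action of $\Cy m$ on $T$ is exactly multiplication by the scalars $\alpha(g)$; this makes $T$ a module over $\F_p[\alpha(G)]=\F_p(\zeta_m)$ (a field since $x^m-1$ is separable, forcing $p\nmid m$), i.e.\ an $\F_p(\zeta_m)$-vector space with $\Cy m$ acting by $m$th roots of unity.

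\textbf{Parts (3) and (4).} With $0$ the common fixed point of $\Cy m$, I would compute stabilisers. The stabiliser of a point $x\in\bar k$ contains a translation only if that translation fixes $x$, i.e.\ is trivial; so $\Stab_G(x)$ injects into $\Cy m$ via $\alpha$, and $\Stab_G(x)\ne 1$ forces $x$ to be a fixed point of a nontrivial scaling-type element, which happens only for $x=0$ (after the normalisation; in general, $x$ in the translate of the orbit of fixed points). Thus if $m=1$ every orbit is regular, giving (3). If $m\ne 1$: the orbit of $0$ is $T\cdot 0=T$ (a $T$-torsor, since $\Cy m$ fixes $0$), it is the full set of fixed points of elements of $G\setminus T$ (each such element is conjugate into $\Cy m$ by a translation in $T$, so its fixed point lies in $T\cdot 0$), and it is the unique non-regular orbit because any other point has trivial stabiliser by the stabiliser computation. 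This is (4).

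\textbf{Parts (5), (6), (7).} For (5), I would note $I(X)=\prod_g g(X)$ is visibly permuted by $G$ (precomposition with $g_0$ permutes the factors), monic up to the sign $(-1)^{|G|-1}$ coming from the product of leading coefficients... actually the leading coefficient is $\prod_g\alpha(g)$; after the normalisation $\prod_{g\in G\setminus T}$ of scalings, one checks this product is $(-1)^{|G|-1}$ --- I would verify this by pairing $\alpha(g)$ with $\alpha(g)^{-1}$ and treating the element of order $2$ if present, or more cleanly by observing $I(X)/(-1)^{|G|-1}$ must be monic $G$-invariant of degree $|G|$ vanishing at the fixed point, and $k(X)$ is degree $|G|$ over $k(X)^G$ by Galois theory for the quotient, so $k(X)^G=k(I)$ and $I$ generates. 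For (6), if $O$ is a regular orbit then $\prod_{r\in O}(X-r)$ has degree $|G|=\deg I$ and is $G$-invariant, hence a polynomial in $I$ of degree one: $\prod_{r\in O}(X-r)=\lambda_1 I+\lambda_0$; comparing leading coefficients gives $\lambda_1=(-1)^{|G|-1}$, and comparing constant terms (evaluate at $X=0$, using $I(0)=0$ after normalisation, giving $\prod_{r\in O}(-r)=(-1)^{|O|}\prod r = \lambda_0$ and $|O|=|G|$) pins down $\lambda_0=(-1)^{|G|-1}\prod_{r\in O}r$ after sorting the sign. For (7), the non-regular orbit $U$ equals $T\cdot 0=T$, and $I_T=\prod_{g\in T}(X-g(0))=\prod_{r\in U}(X-r)$ is monic of degree $|T|=|G|/m$; it is $\Cy m$-semiinvariant because $\Cy m$ permutes $U$, scaling $I_T$ by a root of unity; writing $\lambda=\prod_{r\in U}r=I_T(0)\cdot(-1)^{|U|}$ up to sign, and noting $I_T-\lambda$ (in the normalised coordinates, $I_T$ has zero... hmm) --- here I would just observe $I_T$ restricted to... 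The cleanest statement: in the normalised coordinates $I_T(X)=\prod_{t\in T}(X-t)$ is an additive-type polynomial with $I_T(0)=0$, so $\prod_{r\in U}(X-r)=I_T$, and translating back introduces exactly the constant $-\lambda$. Uniqueness up to scalars follows since any $G$-semiinvariant of degree $|G|/m$ restricts a power-character of $G$ and the space of such is one-dimensional by the structure of $k[X]$ as $k[I]$-module.

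\textbf{Main obstacle.} I expect the genuinely fiddly point to be tracking the signs and the claim that $\prod_{g\in G\setminus T}\alpha(g)=(-1)^{|G|-1}$ (equivalently that the normalisation in (5) is consistent), and relatedly making parts (6) and (7) come out with precisely the stated signs $(-1)^{|G|-1}$ and the precise constants $\prod_{r\in O}r$, $\lambda$. The structural content (parts 1--4) is straightforward once the common fixed point is extracted; it is the bookkeeping of leading versus constant coefficients under the translation normalisation, and restoring the normalisation afterwards, that needs care.
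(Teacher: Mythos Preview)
Your approach to parts (1)--(6) is essentially the paper's, with one pleasant variation: you obtain the splitting $G\cong T\rtimes\Cy m$ by exhibiting a fixed point for an element whose $\alpha$-image generates $\alpha(G)$ and conjugating it to $0$, whereas the paper simply invokes coprimality of $|T|$ and $|G/T|$. For (5) the paper carries out exactly the computation you sketch, writing $\prod_{g\in G}\alpha(g)=\bigl(\prod_{i=0}^{m-1}\zeta_m^i\bigr)^{|T|}$ and doing a short parity check to get $(-1)^{|G|-1}$; for (6) it also compares leading and constant coefficients, using $I(0)=0$ (which holds without any normalisation, since $\id\in G$ contributes the factor $X$).

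Your argument for (7), however, is incomplete as written. The ``translate back'' step can be made rigorous, but only via a fact you name without actually exploiting: $I_T(X)=\prod_{v\in T}(X+v)$ is an \emph{additive} polynomial (a product over an $\F_p$-subspace), so that $I_T(X-a)=I_T(X)-I_T(a)$, and then $I_T(a)=\prod_{v\in T}(a+v)=\prod_{r\in U}r=\lambda$ because $U=T\cdot a$. You never carry this out, and your text trails off mid-sentence. The paper bypasses the whole normalisation manoeuvre with a one-line trick you missed: since $U$ is a regular $T$-orbit by (4), simply apply (6) with $T$ in place of $G$ to obtain $\prod_{r\in U}(X-r)=(-1)^{|T|-1}(I_T-\lambda)$, and note that $(-1)^{|T|-1}=1$ in $k$ in every case ($|T|=1$ in characteristic $0$; $|T|$ an odd prime power if $\vchar k$ is odd; and $-1=1$ if $\vchar k=2$). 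For uniqueness the paper observes that the roots of any monic $G$-semi-invariant of degree $|G|/m$ form a $G$-stable multiset too small to contain a regular orbit, hence must be exactly $U$; your module-theoretic sketch is vaguer than this direct argument.
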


\begin{proof}
(1)
The elements of $G$ with $\alpha(g)=1$ form a group 
of translations $T$, which is naturally an additive subgroup of $k$. As $G$ is finite, 
$T$ is trivial if $\vchar k=0$. 
The map $g\mapsto \alpha(g)$ embeds $A=G/T\injects k^\times$, so it is a cyclic group, $A\iso \Cy m$.

Suppose $\vchar k=p>0$. Then $p\nmid m$.
As $|T|$ and $|A|$ are coprime, the extension of $A$ by $T$ is split.
Take a generator $g$ of $A=\Cy m$ with $\alpha(g)=\zeta_m$. It conjugates a translation $x\mapsto x+v$ to
$x\mapsto x+\zeta_m v$. This makes $T$ into an $\F_p(\zeta_m)$-vector space, with the 
asserted conjugation action by $\Cy m$.

(2)
If $\alpha(g)\ne 1$, then
the map $X\mapsto \alpha(g)X+\beta(g)$ has a unique fixed point $a=\beta(g)/(1-\alpha(g))$ on $\A^1$, 
and it is $k$-rational. 

(3)
Clear, since translations have no fixed points.

(4)
Let $g$ be a generator of $\Cy m$ and $a\in k$ its fixed point.
The stabiliser of $a$ is precisely $\Cy m$ (as it meets $T$ trivially), so $a$ 
has orbit of length $|T|$; it is a regular $T$-orbit. 
Every other point in this orbit has a conjugate stabiliser
that meets $\Cy m$ trivially, by the uniqueness of fixed points. Thus, these stabilisers 
cover $(m-1)|T|$ elements of $G$, which must be the whole of $G\backslash T$.
Hence, the orbit of $a$ accounts for all fixed points of elements of $G$,
in other words every other $G$-orbit is regular. 
%

(5) $I(X)$ is clearly $G$-invariant, and has the right degree $|G|$. Its constant coefficient is 0, and
the leading coefficient is
$$
  \prod_{g\in G}\alpha(g) \>=\> \Bigl(\prod_{i=0}^{m-1}\zeta_m^i\Bigr)^{|T|}\>=\>
  \leftchoice{(-1)^{|T|}=-1}{\text{if $2|m$}}{\quad\>1^{|T|}=1}{\text{if $2\!\nmid\! m$}}
     \>=\> (-1)^{|G|-1}.
$$

(6) The right-hand side is $G$-invariant. So is the left-hand side, since every $g\in G$ 
permutes its roots and $\alpha(g)^{|G|}=1$. The claim follows from (5), by
comparing the leading and the constant terms.

(7) Because $U$ is a $G$-orbit, the polynomial $\prod_{x\in U}(X-r)$ is 
$G$-invariant up to scalars. (And it is a unique such polynomial of this degree, 
because a non-regular orbit is unique if it exists.) By part (6) with $G=T$, 
$$
  \prod_{r\in U}(X-r) = I_T- \lambda.
$$
\end{proof}

\begin{proposition}
\label{autC}
Let $k$ be a field with $\vchar k\ne 2$, and $C/k$ a hyperelliptic curve 
$$
  C: Y^2 = c \prod_{r\in R}(X-r), \qquad R\subset \bar k.
$$
Let $G\subset\Aut_k C$ be a subgroup of affine automorphisms 
$$
  g(X) = \alpha(g) X+\beta(g), \qquad g(Y)=\gamma(g)\,Y \qquad \qquad (g\in G).
$$
Write $K$ and $\Gbar$ for the kernel and the image of the map $X\mapsto g(X)$ from $G$ 
to $\Aut_k \A^1$.

\newpage

Then:

%
%
\begin{enumerate}
\item
$G$ is a central extension of $\Gbar$ by $K$, and
\par\noindent
\begin{tabular}{llllll}
\quad $G=\Gbar$   &$\iff$&  $|K|=1$  &  $\iff$ & hyperelliptic involution $\notin G$.\cr
\quad $G\ne\Gbar$ &$\iff$&  $|K|=2$  &  $\iff$ & hyperelliptic involution $\in G$.\cr
\end{tabular}
\item 
$\Gbar\iso T\rtimes_\alpha \Cy m$ with $T=\ker\alpha$ the subgroup of translations
of $\Gbar$.
\item
If $\vchar k=0$, then $T$ is trivial.
\item
If $\vchar k\!=\!p\!>\!0$, then $p\!\nmid\! m$, and $T$ is an elementary abelian \hbox{$p$-group}.
Moreover, $T$ is an $\F_p(\zeta_m)$-vector space, with
$\Cy m$ acting on $T$ by multiplication by $m$th roots of unity.
\item 
$\alpha$ is a primitive character of $\Gbar/T\iso \Cy m$, and $\gamma^2=\alpha^{|R|}$. 
The group $\Gbar$ acts naturally on $R$, and either
\begin{itemize}
\item[(a)]
$|R|\equiv 0\mod m$ and $\gamma^2=\triv$;
as a $\Gbar$-set, $R$ is a union of regular orbits; or
\item[(b)]
$|R|\equiv 1\mod m$ and $\gamma^2=\alpha\ne\triv$; 
as a $\Gbar$-set, $R$ is a union of regular orbits and
one non-regular orbit $R_0\iso \Gbar/\Cy m$, which is the set of fixed points of non-trivial 
elements of $\Gbar$.
\end{itemize}
\item If $|R|$ is even, then $\gamma(g)/\alpha(g)^{|R|/2}=\pm 1$ for $g\in G$, and 
it is $-1$ if and only if $g$ swaps the two points at infinity of $C$.
\end{enumerate}
\end{proposition}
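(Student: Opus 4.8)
The plan is to obtain parts (1)--(4) almost directly from Lemma~\ref{autA1}, to extract part~(5) from the functional equation satisfied by any affine automorphism together with the orbit dichotomy of that lemma, and to read off part~(6) from the chart at infinity. For (1)--(4): I would first observe that an element $g$ of the kernel $K$ fixes the $X$-coordinate, so it acts on $k(C)=k(X)[Y]/(Y^2-f(X))$ by $Y\mapsto\gamma(g)Y$, and compatibility forces $(\gamma(g)^2-1)f(X)=0$ in $k(C)$, hence $\gamma(g)=\pm1$. Therefore $g$ is either the identity or the hyperelliptic involution $\iota\colon(X,Y)\mapsto(X,-Y)$, so $K\subseteq\{1,\iota\}$, $|K|\le2$, and the three equivalences in~(1) follow. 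Conjugating $\iota$ by any element of $G$ again fixes $X$ and negates $Y$, so $\iota$ is central and $G$ is a central extension of $\Gbar=G/K$ by $K$; parts (2)--(4) are then precisely Lemma~\ref{autA1}(1) applied to the finite affine group $\Gbar\subseteq\Aut_k\A^1$.

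For (5): the starting point is that $g$ defines an automorphism of $C$ exactly when $g^*$ preserves $Y^2-f(X)$, i.e.
$$
  \gamma(g)^2\,f(X)=f(\alpha(g)X+\beta(g)).
$$
Comparing leading coefficients of the two sides gives $\gamma^2=\alpha^{|R|}$ as characters of $G$, and comparing their root sets shows that $\Gbar$ permutes $R$; moreover Lemma~\ref{autA1}(1) identifies the image of $\alpha$ with the group of $m$th roots of unity in $k$, so $\alpha$ is a faithful, hence primitive, character of $\Gbar/T\iso\Cy m$. I would then use that $R$ is a $\Gbar$-stable subset of $\A^1(\bar k)$, hence a disjoint union of $\Gbar$-orbits: by Lemma~\ref{autA1}(3)--(4) each is regular of size $m|T|$, with the single exception that one of them may be the non-regular orbit $R_0$ of size $|T|$, namely the set of fixed points of $\Gbar\setminus T$ (equivalently, of all non-trivial elements of $\Gbar$, since translations act freely). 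By part~(4), $|T|=1$ in characteristic $0$, and otherwise a power of $|\F_p(\zeta_m)|$, which is $\equiv1\pmod m$ because its multiplicative group contains the order-$m$ element $\zeta_m$; so $|T|\equiv1\pmod m$, and hence $|R|\equiv0$ or $1\pmod m$ according to whether $R_0$ is absent from or present in $R$. Substituting this into $\gamma^2=\alpha^{|R|}$ and recalling that $\alpha$ has order $m$ gives the dichotomy: the residue $0$ gives $\gamma^2=\triv$ and case~(a); the residue $1$ (which forces $m>1$) gives $\gamma^2=\alpha\ne\triv$, forces $R_0\subseteq R$, and identifies $R_0\iso\Gbar/\Cy m$ as a $\Gbar$-set via Lemma~\ref{autA1}(4) --- case~(b).

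For (6): when $|R|=2d$ is even, $C$ has two points at infinity, which in the chart $U=1/X$, $V=Y/X^{d}$ are the solutions $(0,\pm\sqrt c)$ of $V^2=U^{2d}f(1/U)$, the right-hand side having value $c\ne0$ at $U=0$. In this chart $g$ sends $(0,v)$ to $\bigl(0,\,v\,\gamma(g)/\alpha(g)^{d}\bigr)$, and $\bigl(\gamma(g)/\alpha(g)^{d}\bigr)^{2}=\gamma(g)^2/\alpha(g)^{|R|}=1$ by~(5); so $\gamma(g)/\alpha(g)^{|R|/2}=\pm1$, equal to $+1$ exactly when $g$ fixes each point at infinity and to $-1$ exactly when it swaps them. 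I expect the only delicate point to be the congruence $|R|\equiv0$ or $1\pmod m$ in part~(5): it combines the $\F_p(\zeta_m)$-vector space structure of $T$ from part~(4) with the regular/non-regular orbit dichotomy of Lemma~\ref{autA1}, and one must keep careful track of whether the non-regular orbit lies inside $R$ at all.
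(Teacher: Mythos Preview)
Your proof is correct and follows the paper's approach throughout: Lemma~\ref{autA1} for (1)--(4), the functional equation $\gamma(g)^2 f(X)=f(\alpha(g)X+\beta(g))$ together with the orbit dichotomy for (5), and the chart $u=1/X$, $v=Y/X^{|R|/2}$ at infinity for (6). The only difference is organizational: the paper packages the congruence $|T|\equiv 1\pmod m$ and the $\Gbar$-set structure of $R$ into a separate abstract lemma (Lemma~\ref{groupstr}), whereas you extract them directly from Lemma~\ref{autA1}(3)--(4) and the $\F_p(\zeta_m)$-vector space structure of $T$.
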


\begin{proof}
(1) Clear.
(2),(3),(4) follow directly from Lemma \ref{autA1}.
(5) Since automorphisms preserve the equation of $C$ up to a constant, 
$\Gbar$ permutes the elements of $R$, and $\gamma^2=\alpha^{|R|}$. 
The rest follows from the following lemma and Lemma \ref{autA1} (2),(3).
(6) The coordinates of the chart at infinity for $C$ are $u=1/X$ 
and $v=Y/X^{|R|/2}$, and the two points at infinity are 
$u=0,v=\pm\sqrt{c}$ on this chart, when $|R|$ is even. The claim follows. 
\end{proof}

\begin{lemma}
\label{groupstr}
Suppose $G$ is a group of the form $G=\F_p^r\rtimes \Cy m$ ($p\!\nmid\!m$)
such that no non-trivial elements $g\in \Cy m$ and $h\in\F_p^r$ commute.
Let $R$ be a faithful $G$-set 
on which elements $\id\ne g\in\F_p^r$ have no fixed points, and 
elements $\id\ne g\in \Cy m$ have at most one fixed point. 
Then
\begin{enumerate}
\item 
$p^r\equiv 1\mod m$;
\item
$R$ is a union of $G$-regular orbits plus at most one non-regular
orbit $R_0\iso G/\Cy m$, consisting of fixed points of non-trivial 
elements of $G$ on $R$.
\end{enumerate}
\end{lemma}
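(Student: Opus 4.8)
The plan is to analyse the action of $G=\F_p^r\rtimes\Cy m$ on $R$ orbit by orbit, using the structure of $G$ and its subgroups. First I would observe that since $\Cy m$ acts on $T=\F_p^r$ with no non-trivial fixed vectors among the non-identity elements of $\Cy m$, a generator $\sigma$ of $\Cy m$ acts on $T$ as a fixed-point-free automorphism of order $m$; equivalently the minimal polynomial of $\sigma$ on $T$ has no root equal to $1$, so $T$ becomes a module over $\F_p[\zeta_m]$, and in particular $m\mid |\F_p(\zeta_m)^\times|$-related orders. The cleanest way to get part~(1) is to count: $G$ acts on $R$, and by faithfulness $R\ne\emptyset$; pick any point and look at its stabiliser, which must meet $T$ trivially (as non-trivial translations are fixed-point-free), hence embeds in $\Cy m$, so every orbit has length $p^r\cdot(m/|\text{stab}|)$, and in particular $p^r$ divides the length of every orbit, while $|G|=p^r m$. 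To pin down $p^r\equiv 1\bmod m$ I would instead count fixed points of a generator $g$ of a conjugate of $\Cy m$: the elements of $G$ of order dividing (and equal to) various divisors of $m$ come in $\F_p^r$-conjugacy classes, and by the fixed-point hypotheses a Sylow-style count of $\sum_{g}|\Fix(g)|$ via Burnside forces the congruence. More directly: $T$ acts freely on $R$, so $|T|=p^r$ divides $|R|$; and the non-regular orbit, if present, has the form $G/\Cy m$ which has size $p^r$, consistent with this.

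For part~(2), I would classify the $G$-orbits $O\subset R$ by their stabilisers. Fix $x\in O$ and let $S=\Stab_G(x)$. Since $S\cap T=\{\id\}$ (no non-trivial translation has a fixed point) and $|T|,|\,\Cy m|$ are coprime, $S$ projects isomorphically onto a subgroup of $\Gbar/T\iso\Cy m$, so $S$ is cyclic of order $d\mid m$ and $S$ is conjugate to a subgroup of $\Cy m$. If $d=1$ the orbit is regular. If $d>1$, then $S$ is generated by an element $g$ of order $d$ which is conjugate in $G$ to an element of $\Cy m$; such an element has exactly one fixed point on $R$ by hypothesis, namely $x$. The key claim is then that at most one orbit can have $d>1$, and when it occurs we must in fact have $d=m$ and $O\iso G/\Cy m$. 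To see this I would argue that the fixed-point sets of the various non-trivial elements of (conjugates of) $\Cy m$ are forced to coincide: if $g$ has order $d$ and fixes $x$, then $g$ lies in $\Stab_G(x)$, a cyclic group conjugate into $\Cy m$; if moreover some element $h$ of prime order $\ell\mid m$ with $\ell\ne$ whatever, the subgroup generated could exceed cyclic unless all these stabilisers are nested. The cleanest route is: the number of non-trivial elements of $G$ lying in $\Gbar$-conjugates of $\Cy m$ is exactly $(m-1)\cdot p^r$ (each of the $p^r$ conjugates of $\Cy m$ contributes $m-1$ non-identity elements, and distinct conjugates meet trivially since $\Cy m$ is self-normalising-up-to-$T$ — here I use that no non-trivial element of $\Cy m$ commutes with a non-trivial element of $T$). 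Each such element has exactly one fixed point on $R$; summing, the total number of (element, fixed point) incidences with the element non-trivial is $(m-1)p^r$. If $R_0$ is the set of points with non-trivial stabiliser, then each $x\in R_0$ has stabiliser of some order $d_x>1$ contributing $d_x-1$ such incidences, and $\sum_{x\in R_0}(d_x-1)=(m-1)p^r$. On the other hand $R_0$ is a union of $G$-orbits, and an orbit of a point with stabiliser of order $d$ has size $p^r m/d$, contributing $(p^r m/d)(d-1)=p^r m(1-1/d)$ to the sum; this is maximised and, crucially, can equal $(m-1)p^r$ only when $d=m$ and there is exactly one such orbit, of size $p^r$, i.e. $R_0\iso G/\Cy m$. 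That orbit is automatically a regular $T$-orbit since $T$ acts freely and $|T|=p^r=|R_0|$.

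The main obstacle I anticipate is the counting step that forces uniqueness of the non-regular orbit — specifically, verifying that distinct $T$-conjugates of $\Cy m$ intersect trivially, so that the count of $(m-1)p^r$ non-trivial "elliptic" elements is exact rather than an overcount. This rests on the hypothesis that no non-trivial $g\in\Cy m$ commutes with a non-trivial $h\in\F_p^r$: if $\langle\sigma\rangle$ and $h\langle\sigma\rangle h^{-1}$ shared a non-trivial element $\tau$, then $\tau$ and $h^{-1}\tau h$ would both lie in $\langle\sigma\rangle$ (abelian), so $h$ would normalise $\langle\sigma\rangle$; since $\mathrm{Aut}(\Cy m)$ has order prime to $p$ while $h$ has $p$-power order, $h$ would in fact centralise $\langle\sigma\rangle$, contradicting the hypothesis. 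With that in hand the incidence count is clean, and the rest is the elementary inequality $p^r m(1-1/d)\le(m-1)p^r$ with equality iff $d=m$, together with additivity over orbits in $R_0$, which forces a single orbit. Parts~(1) and~(2) then follow, with $p^r\equiv 1\bmod m$ coming out of the existence of the $G$-set structure: $G/\Cy m$ has order $p^r$ and carries a transitive $G$-action where $\Cy m$ is a point stabiliser, and more elementarily $|T|=p^r$ divides $|R|$ while each regular orbit also has size divisible by $m$, so reducing any orbit-size identity modulo $m$ gives the congruence via the non-regular orbit of size $p^r$.
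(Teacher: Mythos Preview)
Your argument for part~(2) contains a genuine gap. You assert that ``each such element has exactly one fixed point on $R$'', but the hypothesis only says \emph{at most one}. With the correct inequality your incidence count reads
\[
  \sum_{O\subset R_0} p^r m\Bigl(1-\tfrac{1}{d_O}\Bigr)
    \;=\;\sum_{x\in R_0}(d_x-1)
    \;\le\;(m-1)p^r,
\]
so $\sum_O(1-1/d_O)\le 1-1/m<1$; since each summand is $\ge 1/2$, this still yields at most one non-regular orbit, which is nice. But for that single orbit it only gives $d\le m$, not $d=m$. Your conclusion ``can equal $(m-1)p^r$ only when $d=m$'' uses equality, which you have not established. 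Concretely, nothing in your count rules out a lone orbit of type $G/C_d$ with $1<d<m$.

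The paper closes this gap by a different mechanism. A non-regular orbit $O$ with stabiliser of order $d$ breaks into $m/d$ regular $T$-orbits, each set-wise stabilised by $C_d\subset C_m$. Since $T\rtimes C_d$ acts transitively on each such $T$-orbit (of size $p^r$) and point-stabilisers have order $d$ coprime to $p$, every non-trivial element of $C_d$ has a fixed point in \emph{each} of the $m/d$ pieces. The hypothesis ``at most one fixed point'' then forces $m/d\le 1$, i.e.\ $d=m$; uniqueness of the non-regular orbit follows immediately, since a second copy of $G/C_m$ would give a second fixed point.

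For part~(1), your first remark (the conjugation action makes $T$ a module over $\F_p[X]/\Phi_m(X)$, hence $|T|$ is a power of $p^{\mathrm{ord}_m(p)}\equiv 1\bmod m$) is essentially the paper's argument, which phrases it as: the non-commuting hypothesis means $C_m$ acts freely on $T\setminus\{0\}$ by conjugation, so $m\mid p^r-1$. You should commit to this. Your later attempts to extract $p^r\equiv 1\bmod m$ from orbit sizes on $R$ do not work, since $R$ may consist entirely of regular orbits (each of size $\equiv 0\bmod m$), in which case no congruence information about $p^r$ is visible there.
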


\begin{proof}

(1)
From the non-commutativity assumption, it follows that the 
orbits of $\Cy m$ on $\F_p^r\setminus\{\id\}$ have length $m$. 

(2)
Since non-trivial elements of $\F_p^r$ have no fixed points,
$R$ is a union of regular $\F_p^r$-orbits. 
Now suppose that $R$ has a non-regular orbit of $G$.  
It is a union of $m/k$ regular $\F_p^r$-orbits for some $k\!>\!1$, 
which $\Cy m$ permutes transitively. 
As $k$ and $|\F_p^r|$ are coprime, elements of $C_k$ must have fixed points in each one of them.
But they have at most one fixed point in total, and so $k=m$, such an orbit
is $\iso  G/\Cy m$, and it is necessarily unique
(and clearly consists of fixed points of non-trivial elements of $G$).
Hence
$$
  R \iso  G \amalg \cdots \amalg  G 
    \qquad\text{or}\qquad
  R \iso  G \amalg \cdots \amalg  G \amalg  G/\Cy m.
$$
In the first case $|R|\!\equiv 0\!\mod m$,
and in the second case $|R|\!\equiv \!p^r\!\equiv \!1\!\mod m$.
%
\end{proof}


\begin{theorem}
\label{appmain}
Let $C$ be a hyperelliptic curve over a field~$k$ with $\vchar k\ne 2$,
$$
  C: Y^2 = c f(X); \qquad  
    f(X) = \prod_{r\in R}(X-r),
$$
and $G\subset \Aut_k C$ a group of automorphisms acting via
$$
  g(X) = \alpha(g) X+\beta(g), \qquad g(Y)=\gamma(g)\,Y \qquad \qquad (g\in G).
$$
Let $I=\prod_{g\in G} g(X)$, and write $R/G$
for the set of orbits of $G$ on the roots of $f$ through the $X$-coordinate action.
\begin{enumerate}
\item
If $G$ contains the hyperelliptic involution 
then $C/G\iso \P^1_k$ with field of rational 
functions \hbox{$k(C/G)=k(\sqrt I)$}.
\item 
  If $G$ acts trivially on the $Y$-coordinate, then
  $$
    C/G:\>\>y^2 = c\, (-1)^{|R|-|R/G|} 
     {\prod_{O\in R/G}}
      (x-{\prod_{r\in O} r}) 
  $$
with $x=I$ and $y=Y$.
\item
  Otherwise, 
  $$
    \qquad 
    C/G:\>\>y^2 = c\>(-1)^\delta\>(x-\lambda^m) \prod_{O\in R/G\atop O\ne \Xi} 
      (x-\prod_{r\in O} r) 
  $$
with $x\!=\!I$ and $y\!=\!(I_T\!-\!\lambda)^{\lfloor m/2\rfloor}Y$.
Here $\Xi$ is the set of $a\!\in\! k$~
that are fixed points of some non-trivial $g\in G$ acting through the $X$-coordinate, 
$$      
  \qquad \qquad  
  m=\frac{|G|}{|\Xi|}, \quad 
  \lambda=\prod_{a\in \Xi}a, \quad \delta=(m-1)(|R/G|-1), \quad I_T=\prod_{g\in\ker\alpha}g(X).
$$
\end{enumerate}
\end{theorem}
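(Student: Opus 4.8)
The plan is to treat the three cases separately, using Lemma~\ref{autA1} as the computational engine and Proposition~\ref{autC} to identify the group-theoretic structure of $\Gbar$ and its action on $R$. Throughout, the quotient $C/G$ is computed as the curve with function field $k(C)^G$, so the task reduces to exhibiting explicit invariant functions $x,y$ and showing they generate $k(C)^G$ and satisfy the stated relation.

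For part~(1), if $G$ contains the hyperelliptic involution, then the invariant subfield already lies in $k(X)$: indeed $k(C)^G\subseteq k(C)^{K}=k(X)$, and then $k(C)^G=k(X)^{\Gbar}=k(I)$ by Lemma~\ref{autA1}(5). A genus-$0$ curve with a rational function field is $\P^1_k$, so $C/G\iso\P^1_k$ with $k(C/G)=k(\sqrt I)$ — where the "$\sqrt I$" is just the name of the coordinate, matching the convention of the other two cases. First I would record this and move on.

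For part~(2), since $G$ acts trivially on $Y$ we have $K=1$, so $G=\Gbar$ acts faithfully through the $X$-coordinate; by Proposition~\ref{autC}(5) we are in case~(a), so $m\mid|R|$, $\gamma^2=\triv$, and $R$ is a disjoint union of regular $G$-orbits. Set $x=I$ and $y=Y$; both are $G$-invariant, and $[k(C):k(x,y)]=[k(C):k(X,Y)]\cdot[k(X):k(I)]=|G|$, so $k(x,y)=k(C)^G$. To get the equation, apply Lemma~\ref{autA1}(6) to each orbit $O\in R/G$: $\prod_{r\in O}(X-r)=(-1)^{|G|-1}(I-\prod_{r\in O}r)$. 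Multiplying over all $|R/G|$ orbits turns $Y^2=c\prod_{r\in R}(X-r)$ into $y^2=c\,((-1)^{|G|-1})^{|R/G|}\prod_{O}(x-\prod_{r\in O}r)$, and since $|R|\equiv|R/G|\cdot|G|\pmod 2$ one checks $((-1)^{|G|-1})^{|R/G|}=(-1)^{|R|-|R/G|}$, giving the claimed sign.

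Part~(3) is the main obstacle, because now $\gamma\ne\triv$ forces $\gamma^2=\alpha\ne\triv$, we are in case~(b) of Proposition~\ref{autC}(5), and the non-regular orbit $R_0=\Xi\cap R$ (the set of fixed points of non-trivial elements acting on $R$) must be handled by Lemma~\ref{autA1}(7) rather than~(6). The proposed invariant coordinates are $x=I$ and $y=(I_T-\lambda)^{\lfloor m/2\rfloor}Y$; I must check $y$ is genuinely $G$-invariant. Here $\gamma$ has order $2m'$ for some $m'\mid m$ with $\gamma^{m}=\alpha^{\lfloor\cdot\rfloor}$... more precisely $\gamma=\pm\alpha^{(|R|-1)/2}$-type twist, and $I_T-\lambda=\prod_{r\in R_0}(X-r)$ transforms under $g$ by $\alpha(g)^{|R_0|}=\alpha(g)^{|G|/m}$, while $Y$ transforms by $\gamma(g)$; the exponent $\lfloor m/2\rfloor$ is chosen exactly so that the $\alpha$-powers cancel $\gamma(g)$ — this is the delicate bookkeeping, using $\gamma^2=\alpha^{|R|}$, $|R|\equiv 1\pmod m$, and the $\F_p(\zeta_m)$-structure from Proposition~\ref{autC}(4) to pin down the order of $\gamma$ relative to $m$. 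Once invariance is established, the degree count $[k(C):k(x,y)]=|G|$ again gives $k(x,y)=k(C)^G$; then rewriting $Y^2=c\prod_{r\in R}(X-r)$: split off the factor $\prod_{r\in R_0}(X-r)=I_T-\lambda$, apply Lemma~\ref{autA1}(6) to the remaining $|R/G|-1$ regular orbits, and substitute $y^2=(I_T-\lambda)^{2\lfloor m/2\rfloor}Y^2$. The factor $(I_T-\lambda)^{2\lfloor m/2\rfloor}$ combines with the lone $(I_T-\lambda)$ from $R_0$ to produce $(I_T-\lambda)^{m\text{ or }m-1}$; when $m$ is odd this is $(I_T-\lambda)^m$, and using $I_T^m\equiv I$ up to the right normalisation (since $I=\prod_{g\in G}g(X)=\prod_{\Cy m}\prod_{T}g(X)$ and the $\Cy m$-conjugates of $I_T$ multiply to something expressible in $I$) one gets the factor $x-\lambda^m$; when $m$ is even, $\lfloor m/2\rfloor=m/2$ and $2\lfloor m/2\rfloor=m$, and the single extra $(I_T-\lambda)$ is absorbed by adjusting which of the two points at infinity is chosen, i.e. it only affects $y$ by a unit and the equation still closes up. Collecting the sign contributions — $(-1)^{|G|-1}$ per regular orbit over $|R/G|-1$ orbits, plus the parity shift from $|R_0|$ and from the $(I_T-\lambda)$ powers — yields $(-1)^\delta$ with $\delta=(m-1)(|R/G|-1)$ after a short parity computation; I expect this final sign reconciliation, together with verifying the $\lfloor m/2\rfloor$ exponent makes $y$ invariant, to be where essentially all the work lies.
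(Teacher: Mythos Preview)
Parts (1) and (2) of your plan are essentially the paper's argument (modulo a slip in the degree chain in (2), which should read $[k(C):k(I,Y)]=[k(C):k(I)]/[k(I,Y):k(I)]=2|G|/2=|G|$).

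Part (3) has a genuine gap. Your opening claim ``$\gamma\ne\triv$ forces $\gamma^2=\alpha\ne\triv$, we are in case~(b) of Proposition~\ref{autC}(5)'' is false: when $m$ is even, $\gamma$ can have order exactly $2$, so $\gamma^2=\triv$ and we are in case~(5a). In that situation $R$ is a union of \emph{regular} $\Gbar$-orbits and the fixed-point set $\Xi$ is disjoint from $R$; there is no orbit $R_0\subset R$ to which Lemma~\ref{autA1}(7) applies. Consequently the factor $(x-\lambda^m)$ does \emph{not} arise from an orbit of $R$ at all --- it comes entirely from the $(I_T-\lambda)^{2\lfloor m/2\rfloor}=(I_T-\lambda)^m$ contributed by $y^2$. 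Your sentence ``the single extra $(I_T-\lambda)$ is absorbed by adjusting which of the two points at infinity is chosen'' has the logic inverted and cannot be made to work as written. In fact, the parity of $m$ governs both whether $\Xi\subset R$ and the sign in the identity relating $(I_T-\lambda)^m$ to $I-\lambda^m$ (it is $+$ for $m$ odd and $-$ for $m$ even), and these two effects must be tracked separately.

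The paper avoids this tangle by a two-step reduction: first quotient by the translation subgroup $T$ using part~(2) (legitimate since $\gamma|_T=\triv$), which reduces to the case $G=\Cy m$ with a single fixed point $\lambda$ on the $I_T$-line; then split into $m$ odd (case~5b, $\lambda\in R/T$) and $m$ even (case~5a, $\lambda\notin R/T$), where the explicit formula $I=\pm(I_T-\lambda)^m+\lambda^m$ makes the computation transparent in each subcase. Your one-shot approach could in principle be salvaged, but it would still require exactly this $m$ odd/even case split, so it buys nothing over the paper's route.
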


\begin{proof}
(1) 
Write $\iota$ for the hyperelliptic involution. We clearly have
\hbox{$C/\langle \iota\rangle\!\iso\!\P^1_k$}, and $k(C/\langle \iota\rangle)=k(X)$. 
Its quotient by $G/\langle\iota\rangle$ is again $\P^1_k$, and its field of rational functions is
$k(\sqrt I)$ by Lemma \ref{autA1}(5).

(2) The polynomials $Y$ and $I$ are $G$-invariant, and 
$$
  k(C/G)=k(C)^G=k(I,Y)
$$
by degree considerations. 
To find the relation between $I$ and $Y$, first note that as $\gamma^2=\triv$, 
$R$ is a union of regular $\Gbar$-orbits by Proposition \ref{autC}.
Because $Y^2/c=f(X)\in k(X)$ is $G$-invariant, by Lemma \ref{autA1}(6),
$$
\begin{array}{llllll} 
  f(X)&=&\displaystyle \prod_{r\in R}(X-r) = \>\prod_{O\in R/G} (-1)^{|G|-1}\bigl(I-\prod_{r\in O} r\bigr)\cr
      &=&\displaystyle \>(-1)^{|R|-|R/G|}\>\prod_{O\in R/G} \bigl(I-\prod_{r\in O} r\bigr),\cr
\end{array}
$$
which gives the required identity between $x=I$ and $y=Y$.

(3) Write $G=T\rtimes \Cy m$ as in Proposition \ref{autC}(1,2).
First, suppose that $T$ is trivial, so that $G=\Cy m$ with $m>1$. We have two cases:

{\bf Case $|T|\!=\!1$, $m$ odd.} 
Since $\gamma\ne \triv$, we are in case (5b) of Proposition~\ref{autC}. 
So $\gamma=\alpha^{-(m-1)/2}$ (the unique character that squares to $\gamma^2=\alpha$)
and there is a unique $G$-invariant root $\lambda\in R$. Hence
elements $g\in G$ must act on the curve $C$ as
$$
  g(X) = \alpha(g) (X-\lambda)+\lambda, \qquad g(Y)=\alpha(g)^{-(m-1)/2}\,Y,
$$
and the polynomials
$$
  I=(X-\lambda)^m+\lambda^m, \qquad J=(X-\lambda)^{(m-1)/2}Y
$$
are easily seen to be $G$-invariant (and the first one is $I$ by Lemma \ref{autA1}(5)). 
As $[k(X):k(I)]\le m$ and $k(X,Y)=k(X,J)$, the polynomials $I$ and $J$ 
generate $k(C)^G$ by degree considerations. They satisfy the relation
$$
  J^2 = Y^2(X-\lambda)^{m-1} = c\>f(X)\>\frac{I-\lambda^m}{X-\lambda} 
      = c\>(I-\lambda^m)\>\frac{f(X)}{X-\lambda}.
$$
Finally, since $f(X)$ has all roots but $\lambda$ coming in $G$-regular orbits, 
by Lemma~\ref{autA1}(6), 
$$
  \frac{f(X)}{X-\lambda} = {\prod_{O\in R/G\atop O\ne\{\lambda\}}}
      (I-{\prod_{r\in O} r}),
$$
which gives the required equation 
$$
  J^2 = c (I-\lambda^m) \prod_{O\in R/G\atop O\ne\{\lambda\}}
      (I-{\prod_{r\in O} r}).
$$

{\bf Case $|T|\!=\!1$, $m$ even.} 
If we were in case (5b) of Proposition \ref{autC}, then 
$\alpha$ would have order $m$ and $\gamma$ order $2m$; in that case, $\ker\alpha\ne\{\id\}$ 
and $G$ contains the hyperelliptic involution --- a contradiction.
%
Hence we must be in case (5a), in other words $R$ is a union of regular
$\Gbar$-orbits.
Note that $\gamma=\alpha^{m/2}$ since $\gamma^2=\triv$ and $\gamma\ne\triv$.

By Lemma \ref{autA1}(3), there is a unique fixed point $\lambda\in k$ of $G=\Cy m$,
with $\lambda\notin R$ by the above. The polynomials
$$
  I=-(X-\lambda)^m+\lambda^m, \qquad J=(X-\lambda)^{m/2}Y
$$
are $G$-invariant (and the first one is $I$ by Lemma \ref{autA1}(5)), and generate $k(C)^G$
by degree considerations.
Using Lemma \ref{autA1}(6) as before, we get the required relation 
$$
\begin{array}{llllll}
  J^2 &=& \displaystyle (X-\lambda)^m\>c\>f(x) =  c\>(-I+\lambda^m)\>\prod_{O\in R/G} -(I-{\prod_{r\in O} r})\cr
      &=& \displaystyle (-1)^{1+|R/G|}\>c\>(I-\lambda^m)\>\prod_{O\in R/G} (I-{\prod_{r\in O} r}).\cr
\end{array}
$$

{\bf Case $|T|\!\ne\! 1$.} 
Finally, suppose that $T$ is non-trivial. First, we can compute $C/T$ by using (2)
and then $C/G=(C/T)/\Cy m$ using the $|T|=1$ cases. 
We have
$$
  k(C/T)=k(C)^T = k(Y, I_T), \qquad I_T = \prod_{g\in T} g(X),
$$
and the equation for $C/T$ is
$$
  C/T:\>\>Y^2 = c \prod_{O\in R/T} (I_T-{\prod_{r\in O} r}).
$$
The group $\Cy m$ acts on this curve by affine transformations of the form
$$
  g(I_T) = \alpha_T(g)I_T+\beta_T(g), \quad g(Y)=\gamma(g)Y \qquad\quad (g\in \Cy m),
$$
and
$$
  \alpha_T(g)=\alpha(g)^{|T|}=\alpha(g),
$$
since $|T|\equiv1\mod m$ by Lemma \ref{groupstr}(1) and Proposition \ref{autC}(5).
In particular, $\Cy m$ acts non-trivially on the $Y$-coordinate of $C/T$ and does not contain the
hyperelliptic involution of $C/T$.  
By Lemma \ref{autA1} (7), $I_T-\lambda$ is the unique monic polynomial of degree
$|G|/m$ that is $G$-invariant up to scalars.
In other words, 
on the $I_T$-coordinate $\lambda$ is the unique fixed point for the $\Cy m$-action.
%
%
%
%
%
%
%
Hence, by the $|T|=1$ cases we find the quotient
$$
  (C/T)/\Cy m: \>\> 
   \>y^2 = c\>(-1)^{\delta}\>(x-\lambda^m) \prod_{O\in (R/T)/\Cy m\atop O\ne \{\lambda\}} 
      (x-\prod_{r\in O} r),
$$
where 
$$
\begin{array}{llllll}
  \delta&=&(m-1)(|(R/T)/\Cy m|-1) = (m-1)(|R/G|-1),\\[3pt]
  x&=& \displaystyle \prod_{g\in \Cy m}g(I_T)=\prod_{g\in \Cy m}g\Bigl(\prod_{t\in T}t(X)\Bigr)=I,\\[2pt]
  y&=& (I_T-\lambda)^{\lfloor m/2\rfloor}Y,\cr
\end{array}
$$
using that $R$ is a union of regular $T$-orbits by Proposition \ref{autC} (5) with $G=T$.
\end{proof}




\begin{corollary}
\label{genuscor}
Suppose $C$ and $G$ are as in Theorem \ref{appmain}, and 
let $r$ be the number of regular orbits of $G$ on $R$. Then
$$
  \genus(C/G)=\left\{
    \begin{array}{llllll}
    0 &&&& \text{if } \iota\in G,\cr
    \bigl\lfloor \frac r2-\frac 12 \bigl\rfloor \!\!&=\!\!&
    \bigl\lfloor \frac{|R|}{2|G|}-\frac 12\bigr\rfloor && \text{if } \gamma=\triv, \cr
    \bigl\lfloor \frac r2 \bigl\rfloor \!\!&=\!\!&
    \bigl\lfloor \frac{|R|}{2|G|} \bigr\rfloor &&\text{if } \iota\notin G, \gamma\ne\triv. \cr
    \end{array}
\right.
$$
\end{corollary}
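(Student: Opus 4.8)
The plan is to read the genus straight off the three explicit models of Theorem~\ref{appmain}, using the classical fact that the smooth projective model of $y^2 = c\,h(x)$ with $h$ squarefree of degree $d\ge 1$ has genus $\lfloor(d-1)/2\rfloor$. This comes from tame Riemann--Hurwitz for the degree-$2$ cover $C/G\to\P^1_x$ (tame since $\vchar k\ne 2$), ramified over the $d$ roots of $h$ and over $\infty$ exactly when $d$ is odd. So the only real work is to compute, in each case, the degree $d$ of the right-hand side of the model, and to note that it is squarefree (so that the formula applies); the latter is inherited from the squarefreeness of $f$, essentially as in the proof of Theorem~\ref{appmain}, since the factors $I-\prod_{r\in O}r$ are pairwise coprime ($I$ being nonconstant) and their product divides $f$.

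The case $\iota\in G$ is immediate: $C/G\iso\P^1_k$ by Theorem~\ref{appmain}(1). In the case $\gamma=\triv$ (which forces $\iota\notin G$), Proposition~\ref{autC}(5) puts us in subcase~(5a), so $R$ is a union of regular $\Gbar$-orbits and $|R/G|=r=|R|/|G|$. The model of Theorem~\ref{appmain}(2) then has right-hand side of degree $|R/G|=r$, so $\genus(C/G)=\lfloor(r-1)/2\rfloor=\lfloor r/2-\tfrac12\rfloor$, which with $r=|R|/|G|$ is the stated value.

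The remaining case $\iota\notin G$, $\gamma\ne\triv$ is where a little care is needed. By Theorem~\ref{appmain}(3) the right-hand side is $(x-\lambda^m)\prod_{O\in R/G,\,O\ne\Xi}(x-\prod_{r\in O}r)$, of degree $1+\#\{O\in R/G:O\ne\Xi\}$, and $\gamma\ne\triv$ forces $m>1$ (as $\gamma$ factors through $\Gbar/T\iso\Cy m$). I would now split along Proposition~\ref{autC}(5). In subcase~(5a) all $\Gbar$-orbits on $R$ are regular, so $\Xi\cap R=\emptyset$, hence $\#\{O\ne\Xi\}=|R/G|=r$ and $|R|=r|G|$. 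In subcase~(5b) the unique non-regular orbit $R_0$ lies in $R$ and equals $\Xi$ (both have size $|T|=|G|/m$), so $\#\{O\ne\Xi\}=|R/G|-1=r$ and $|R|=r|G|+|G|/m$. Either way the degree is $r+1$, so $\genus(C/G)=\lfloor r/2\rfloor$; and $\lfloor r/2\rfloor=\lfloor|R|/(2|G|)\rfloor$, which is trivial in~(5a), while in~(5b) we have $|R|/(2|G|)=r/2+\tfrac1{2m}$ with $0<\tfrac1{2m}\le\tfrac14$, which does not change the floor since the fractional part of $r/2$ is $0$ or $\tfrac12$.

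The main obstacle I anticipate is precisely this bookkeeping in the last case: one must notice that whether or not the non-regular orbit $\Xi$ actually meets $R$ the model comes out with the same degree $r+1$, and then that the two possible values of $|R|$ both round down to $\lfloor r/2\rfloor$. Everything else is routine degree-counting, modulo citing the squarefreeness observations already contained in the proof of Theorem~\ref{appmain} (including that $\lambda^m$ is not a root of any other factor in case~(3)).
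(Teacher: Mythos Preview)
Your proof is correct and follows exactly the paper's approach: the paper's own argument is the one-liner ``Apply Theorem~\ref{appmain}, and recall that the genus of a hyperelliptic curve given by a polynomial of degree $n$ is $\lfloor\frac{n-1}{2}\rfloor$'', and you have simply written out the degree-counting and the case split along Proposition~\ref{autC}(5) that this entails. Your bookkeeping in the $\gamma\ne\triv$ case (degree $r+1$ either way, and $\lfloor r/2\rfloor=\lfloor|R|/(2|G|)\rfloor$ because $0<1/(2m)<1/2$) is accurate.
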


\begin{proof}
Apply Theorem \ref{appmain}, and recall that the genus of a hyperelliptic curve given
by a polynomial of degree $n$ is $\lfloor \frac {n-1}2\rfloor$.
\end{proof}


\section{\'Etale cohomology of quotient curves}
\label{setalequo}

In this section, $C/k$ is any non-singular projective curve with a finite group of 
automorphisms $G<\Aut_k C$. We will show that the $G$-action on the \'etale cohomology group
$\H(C_{\bar k},\Q_l)$ is determined by the genera of the quotients of $C$ by subgroups of $G$.

Recall that for every prime $l$ there are canonical isomorphisms
$$
  \H(C_{\bar k},\Q_l) \iso \H(\Jac(C_{\bar k}),\Q_l) \iso (V_l\Jac(C))^*,
$$
where, as usual,
$$
  V_l \Jac(C) = (\varprojlim \Jac(C)[l^n])\tensor_{\Z_l} \Q_l 
$$
is the vector space associated to the Tate module, and $*$ is $\Q_l$-linear dual. 
If~$G<\Aut_k C$ is a group of automorphisms, these become naturally 
$\Q_l G$-representations, and the isomorphisms respect this structure.

\begin{theorem}
\label{thmcover}
Let $\pi: C\to D$ be a Galois cover of non-singular projective curves over a field~$k$, 
with Galois group $G$, and let $l$ be a prime number.
\begin{enumerate}
\item
If $l\ne\vchar k$, then
$\H(C_{\bar k},\Q_l)$ is a $\Q_lG$-representation with rational character.
\item 
There is an isomorphism of $\Q_l$-vector spaces
$$
  \H(C_{\bar k},\Q_l)^G\iso \H(D_{\bar k},\Q_l).
$$
\item
If $\Phi: C\to C$ is a morphism of curves that commutes with $\pi$, then
the isomorphism in (2) 
commutes with the action on $\Phi$. 
\item
If $l\ne \vchar k$, then $\genus(D)=\frac 12\dim \H(C_{\bar k},\Q_l)^G$.
\end{enumerate}
\end{theorem}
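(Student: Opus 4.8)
The plan is to prove (1) with the Grothendieck--Lefschetz trace formula, (2) by exploiting that the quotient map $\pi$ is a \emph{finite} morphism (hence acyclic for the \'etale topology) together with the fact that $|G|$ is a unit in $\Q_l$, and then to deduce (3) and (4) as formal consequences.

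For (1), I would fix $g\in G$ and compute $\mathrm{tr}(g\mid\H(C_{\bar k},\Q_l))$. For $g=1$ this is $2\genus(C)$, an integer. For $g\ne1$ the fixed points of $g$ on $C_{\bar k}$ are isolated, and the graph of $g$ meets the diagonal transversally: on a curve $dg_x$ acts on the one-dimensional tangent space by a root of unity, which cannot be $1$ (else $g$ would fix a neighbourhood of $x$, hence all of the connected curve $C_{\bar k}$). So the trace formula gives $\#\mathrm{Fix}(g)=\sum_i(-1)^i\,\mathrm{tr}(g\mid H^i(C_{\bar k},\Q_l))=2-\mathrm{tr}(g\mid\H(C_{\bar k},\Q_l))$, using that an automorphism acts trivially on $H^0=\Q_l$ and on $H^2=\Q_l(-1)$ (it has degree $1$). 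Hence the character is integer-valued, in particular rational. (Alternatively one could argue by comparison with singular cohomology in characteristic $0$ and spreading out, but the Lefschetz argument is self-contained.)

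For (2), the key point is that $\pi$ is finite, so $R^q\pi_*\Q_l=0$ for $q>0$ and the Leray spectral sequence degenerates into $G$-equivariant isomorphisms $H^i(C_{\bar k},\Q_l)\iso H^i(D_{\bar k},\pi_*\Q_l)$. Next I would check that the unit map $\Q_l\to\pi_*\Q_l$ of \'etale sheaves on $D$ identifies $\Q_l$ with the subsheaf $(\pi_*\Q_l)^G$: this is a stalkwise statement, $(\pi_*\Q_l)_{\bar d}\iso\Q_l[\pi^{-1}(\bar d)]$ is a permutation module on which $G$ acts transitively (even at branch points, where the fibre is $G/G_d$ for the inertia $G_d$), so its $G$-invariants are exactly the diagonal copy of $\Q_l$. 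Since $|G|$ is invertible in $\Q_l$ (characteristic $0$!), the idempotent $e=|G|^{-1}\sum_{g\in G}g$ splits off $(\pi_*\Q_l)^G=\Q_l$ as a $G$-direct summand of $\pi_*\Q_l$; $e$ acts as $0$ on the complementary summand, hence as $0$ on its cohomology by functoriality, so on $H^i(D_{\bar k},\pi_*\Q_l)$ the operator $e$ is the projector onto $H^i(D_{\bar k},\Q_l)$. Transported to $H^i(C_{\bar k},\Q_l)$, the same $e$ is the projector onto the $G$-invariants, whence $\H(C_{\bar k},\Q_l)^G\iso\H(D_{\bar k},\Q_l)$ for $i=1$; unwinding the identifications, the isomorphism is the pullback $\pi^*$.

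For (3), since $\Phi$ descends to some $\Phi_D\colon D\to D$ with $\pi\Phi=\Phi_D\pi$, contravariance gives $\Phi^*\circ\pi^*=\pi^*\circ\Phi_D^*$ on cohomology; in particular $\Phi^*$ preserves $\mathrm{im}(\pi^*)=\H(C_{\bar k},\Q_l)^G$ and is intertwined with $\Phi_D^*$ by the isomorphism of (2). For (4), combine (2) with the fact that $\dim\H(D_{\bar k},\Q_l)=2\genus(D)$, which is where the hypothesis $l\ne\vchar k$ enters. The only step carrying any real content is (2); its substance is the acyclicity of the finite morphism $\pi$ and the stalk computation $(\pi_*\Q_l)^G=\Q_l$ (valid also over the branch locus), everything else being formal or classical.
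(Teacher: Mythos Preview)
Your argument is correct. For (1), (3), and (4) you and the paper proceed identically: Lefschetz fixed-point formula for (1), and formal deductions for (3) and (4). The genuine difference is in (2). The paper argues on the Jacobian side, via the identification $\H(C_{\bar k},\Q_l)\iso(V_l\Jac C)^*$: pushforward and pullback of divisors satisfy $\pi_*\pi^*=|G|$ and $\pi^*\pi_*=\sum_{g\in G}g$, these descend to $V_l\Jac$, and since $|G|$ is invertible there one gets $\pi^*$ injective, $\pi_*$ surjective, and $(V_l\Jac C)^G\iso V_l\Jac D$. You stay in the sheaf world instead: finite $\pi$ has exact $\pi_*$, so $H^i(C_{\bar k},\Q_l)\iso H^i(D_{\bar k},\pi_*\Q_l)$, and the averaging idempotent splits off $(\pi_*\Q_l)^G=\Q_l$ stalkwise. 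Both routes ultimately rest on the same idempotent $|G|^{-1}\sum_{g}g$ being available in characteristic-$0$ coefficients; the paper's version makes the compatibility in (3) completely transparent (everything is an explicit map on divisors), while yours avoids the detour through Jacobians and would carry over verbatim to higher-dimensional $G$-quotients or to other Weil cohomology theories.
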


\begin{proof}
(1)
By the Lefschetz fixed-point formula \cite[Thm. 25.1]{MilE}, 
the trace of any $\sigma\in G$ on $H^1$ is an integer, 
namely 2 minus the number of fixed points of $\sigma$ on $C$.

(2)
The projection $\pi$ induces pushforward and pullback on divisors,
$$
  \pi_*: \Div(C)\lar \Div(D), \qquad \pi^*: \Div(D)\lar \Div(C).
$$
The composition $\pi_*\pi^*$ is multiplication by $|G|$, while $\pi^*\pi_*$ is the trace map~\hbox{$\tr: P\mapsto \sum_{g\in G} P^g$}.
These maps descend to $\Pic^0(C)=\Jac(C)$ and its $l^n$-torsion $\Jac(C)[l^n]$. 
The image of $\Tr$ on $V_l\Jac(C)$ is the group of $G$-invariants, and
$$
  (V_l\Jac(C))^G \iso \tr (V_l\Jac(C)) \iso \pi_*(V_l\Jac(C)) \iso V_l\Jac(D),
$$
as multiplication by $|G|$ is an isomorphism on $V_l$, and 
hence $\pi^*$ is injective and $\pi_*$ is surjective.

(3)
Clear from the construction in (2).

(4)
Clear from (2) and the genus formula $\dim \H(D_{\bar k},\Q_l)=2\genus(D)$.

\end{proof}

\begin{lemma}
\label{uniqinv}
Let $G$ be a finite group. A representation $V$ of $G$ with rational character is 
uniquely determined by $\dim V^H$ for all cyclic subgroups $H<G$.
\end{lemma}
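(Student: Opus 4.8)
The plan is to show that the given data $\{\dim V^H : H<G\text{ cyclic}\}$ determine the character $\chi$ of $V$; since we are in characteristic zero, $V$ is then determined up to isomorphism, which is exactly the claim.

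First I would record two elementary ingredients. For any subgroup $H<G$ the operator $\frac1{|H|}\sum_{h\in H}\rho(h)$ is the projector onto $V^H$, so that
$$
  |H|\,\dim V^H \;=\; \sum_{h\in H}\chi(h).
$$
Second, because $\chi$ takes rational values, $\chi(g)=\chi(g^j)$ whenever $\gcd(j,\operatorname{ord} g)=1$: writing $n=\operatorname{ord} g$, the eigenvalues of $\rho(g)$ are $n$th roots of unity, the automorphism of $\Q(\zeta_n)/\Q$ sending $\zeta_n\mapsto\zeta_n^j$ carries $\chi(g)$ to $\chi(g^j)$, and it fixes $\chi(g)\in\Q$. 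Hence $\chi$ takes a common value on all generators of a given cyclic subgroup, so it descends to a well-defined function $\bar\chi$ on the set of cyclic subgroups of $G$, with $\bar\chi(\langle g\rangle)=\chi(g)$.

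Next I would run a triangular inversion over the poset of cyclic subgroups. If $H$ is cyclic then every subgroup of $H$ is cyclic, and exactly $\varphi(d)$ elements of $H$ generate its subgroup of order $d$; grouping the first identity according to $\langle h\rangle$ and using the second ingredient gives
$$
  |H|\,\dim V^H \;=\; \sum_{H'<H}\varphi(|H'|)\,\bar\chi(H'),
$$
the sum over all (cyclic) subgroups $H'$ of $H$. This is triangular with respect to inclusion, so I would induct on $|H|$: for $H=\{1\}$ it reads $\bar\chi(\{1\})=\dim V^{\{1\}}=\dim V$, and in general
$$
  \varphi(|H|)\,\bar\chi(H) \;=\; |H|\,\dim V^H \;-\sum_{H'<H,\; H'\ne H}\varphi(|H'|)\,\bar\chi(H'),
$$
where $\varphi(|H|)\ge 1$ and every term on the right is known from the data $\dim V^{H'}$ together with the inductive hypothesis applied to the proper subgroups $H'$ (which are cyclic of smaller order). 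Thus $\bar\chi(H)$, and hence $\chi(g)$ for every $g\in G$, is recovered from $\{\dim V^H : H\text{ cyclic}\}$, which finishes the argument.

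The one substantive step is the Galois/rationality argument showing that a rationally valued character is constant on the generators of each cyclic subgroup; the remainder is purely formal, and the inversion could equally be packaged via the M\"obius function of the subgroup lattice of a cyclic group in place of the induction.
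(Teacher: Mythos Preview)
Your proof is correct. The paper takes a different route: it simply cites Serre's \emph{Linear Representations of Finite Groups} twice, first for the fact that some integer multiple of a rational-valued character is afforded by a representation realisable over~$\Q$, and then for the statement that a $\Q$-representation is determined by the dimensions $\dim V^H$ over cyclic~$H$. Your argument is more direct and self-contained: you bypass the realisability question entirely and work at the level of the character, using only the Galois observation that a rational-valued character is constant on generators of each cyclic subgroup, followed by a triangular inversion over the poset of cyclic subgroups. This has the advantage of being elementary and explicit --- one can literally reconstruct $\chi(g)$ from the data by induction on $|\langle g\rangle|$ --- whereas the paper's approach is shorter on the page but leans on nontrivial background from Serre.
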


\begin{proof}
A suitable multiple of $V$ is realisable over $\Q$ (\cite{SerLi} Ch. 12, Prop. 34), 
and for such representations the claim is proved in 
\cite{SerLi} Ch. 13, Cor. to Thm.~30$\,'$.
\end{proof}

\begin{theorem}
\label{abreconstr}
Let $C/k$ be a curve, and $l\ne \vchar k$ a prime number. Suppose 
$G$ is a finite group acting as automorphisms of $C/k$ (not necessarily faithfully). Then
$\H(C_{\bar k},\Q_l)$ is the unique $\Q_l$-representation $V$ of~$G$~such~that
\begin{itemize}
\item[(a)] all character values of $G$ on $V$ are rational, and 
\item[(b)] $\dim V^H=2\genus(C/H)$ for every subgroup $H<G$.
\end{itemize}
\end{theorem}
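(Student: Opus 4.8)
The plan is to verify that $V=\H(C_{\bar k},\Q_l)$ satisfies (a) and (b), and then invoke a uniqueness principle. Property (a) is immediate from Theorem \ref{thmcover}(1), since $C\to C/G$ is a Galois cover with group $G$ (as $G$ acts as automorphisms, possibly after replacing $G$ by its image in $\Aut_k C$, but the character values only depend on traces of individual elements, which are integers by Lefschetz). For property (b), fix a subgroup $H<G$ and apply Theorem \ref{thmcover}(2),(4) to the Galois cover $\pi_H:C\to C/H$ with Galois group $H$: this yields $\H(C_{\bar k},\Q_l)^H\iso \H((C/H)_{\bar k},\Q_l)$, whose dimension is $2\genus(C/H)$ by the standard genus formula. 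Hence $\dim V^H=2\genus(C/H)$ for every $H$.

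For uniqueness, I would argue as follows. Suppose $V'$ is another $\Q_l$-representation of $G$ with rational character satisfying $\dim (V')^H=2\genus(C/H)$ for all subgroups $H<G$; in particular $\dim (V')^H=\dim V^H$ for all $H$, and a fortiori for all \emph{cyclic} subgroups $H<G$. By Lemma \ref{uniqinv}, a representation with rational character is determined by the dimensions of its invariant subspaces under cyclic subgroups, so $V'\iso V$. This is the crux of the argument and where the real content sits — but it has been entirely offloaded to Lemma \ref{uniqinv} (which in turn rests on Serre's results that a multiple of $V$ is realisable over $\Q$, and that rationally realisable representations are determined by their cyclic fixed-space dimensions, equivalently by the values of the character on cyclic subgroups via Artin's induction theorem). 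So in the write-up the "main obstacle" is purely bookkeeping: making sure the hypotheses of Theorem \ref{thmcover} and Lemma \ref{uniqinv} are genuinely met.

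One subtlety worth a sentence in the proof: the theorem allows $G$ to act \emph{non-faithfully}. If $N\lhd G$ is the kernel of the action, then $C/H=C/(HN/N \text{ image})$ and $\H(C_{\bar k},\Q_l)^H$ only depends on the image of $H$ in $G/N$; but $N$ acts trivially on $\H$ as well, so $V=\H(C_{\bar k},\Q_l)$ is pulled back from $G/N$, and both conditions (a), (b) and the conclusion of Lemma \ref{uniqinv} are compatible with inflation from $G/N$ to $G$. Thus nothing breaks, and it suffices to treat the faithful case, applying Theorem \ref{thmcover} to the honest Galois covers $C\to C/H$ for $H$ ranging over subgroups of the faithful quotient and then inflating. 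I would state this reduction briefly rather than belabour it.

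In summary, the proof is a three-line deduction: (a) from Lefschetz, (b) from Theorem \ref{thmcover}(2),(4) applied subgroup-by-subgroup, and uniqueness from Lemma \ref{uniqinv}; the only care needed is the non-faithfulness bookkeeping and confirming that "for every subgroup" can be weakened to "for every cyclic subgroup" when invoking Lemma \ref{uniqinv}.
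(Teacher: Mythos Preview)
Your proposal is correct and follows essentially the same route as the paper: existence of (a) and (b) via Theorem \ref{thmcover}, uniqueness via Lemma \ref{uniqinv}, and a brief reduction to the faithful case for the non-faithful action. The paper handles non-faithfulness by writing $\dim V^H=\dim V^{q(H)}=2\genus(C/q(H))=2\genus(C/H)$ for the quotient map $q:G\to\Aut C$, which is exactly your inflation remark phrased pointwise.
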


\begin{proof}
Uniqueness follows from Lemma \ref{uniqinv}.
Theorem \ref{thmcover}(1,4) shows that $V=\H(C_{\bar k},\Q_l)$ satisfies (a) and (b)
when the action is faithful. 
Therefore, if $q: G\to\Aut C$ is any action, then 
$$
  \dim V^H = \dim V^{q(H)} = 2\genus(C/q(H)) = 2\genus(C/H),
$$
and $q(g)$ has rational trace on $V$ for every $g\in G$.
\end{proof}

\begin{remark}
\label{remcffield}
In Theorem \ref{abreconstr}(b), `subgroup' may be replaced by `cyclic subgroup',
since these are sufficient for Lemma \ref{uniqinv}.
Note also that the assertion concerns only the character of $V$, so the result remains true
after any extension of the coefficient field (e.g. to $\bar\Q_l$ or $\C$).
\end{remark}

\medskip

\section{\'Etale cohomology of hyperelliptic curves}
\label{setalehyp}

In this section we prove the following theorem that describes the action of automorphisms
on the first \'etale cohomology group of a hyperelliptic curve.

\begin{theorem}
\label{abmain}
Let $k$ be a field of characteristic $\ne 2$, and $C/k$ a hyperelliptic curve given by
$$
  Y^2 = c\prod_{r\in R}(X-r), \qquad R\subset\bar k.
$$
Let $G\subset\Aut_k C$ be an affine group of automorphisms acting as
$$
  g(X) = \alpha(g) X+\beta(g), \qquad g(Y)=\gamma(g)\,Y \qquad \qquad (g\in G).
$$
Then for every prime $l\ne\vchar k$ and every embedding $\Q_l\injects\C$,
$$
  \H(C_{\bar k},\Q_l)\tensor\C \>\>\iso\>\> V\ominus\epsilon 
$$
as a complex representation of $G$, where
$$
  V = \tilde\gamma \otimes (\C[R] \ominus \triv), \qquad 
  \epsilon = \bigleftchoice{0}{\text{if $|R|$ is odd,}}{\det V}{\text{if $|R|$ is even,}}
$$
and $\tilde\gamma: G\to \C^\times$ is any one-dimensional representation 
with $\ker\tilde\gamma=\ker\gamma$.
The representation $\triv\oplus\epsilon$ is the permutation action 
of $G$ on the points at infinity of $C$.
\end{theorem}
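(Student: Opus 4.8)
The plan is to deduce the theorem from the reconstruction principle of Theorem~\ref{abreconstr}. Since that statement concerns only characters, Remark~\ref{remcffield} lets one work over $\C$ and with cyclic subgroups, so it suffices to prove that the complex representation $W:=V\ominus\epsilon$ has rational character and satisfies $\dim W^H=2\genus(C/H)$ for every cyclic subgroup $H<G$; then $W\iso\H(C_{\bar k},\Q_l)\tensor\C$ automatically. The closing sentence about the points at infinity I would treat as a separate, short verification.

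For the invariant dimensions I would first record two elementary identities: from $V=\tilde\gamma\tensor(\C[R]\ominus\triv)$ and Frobenius reciprocity, $\dim V^H$ equals the number of $H$-orbits $O$ on $R$ on which $\tilde\gamma$ is trivial on the point-stabiliser, minus $1$ when $\gamma|_H=\triv$; and $\dim\epsilon^H=1$ exactly when $\det V|_H=\triv$ (while $\epsilon=0$ when $|R|$ is odd). Then I would feed in the orbit picture of Proposition~\ref{autC}(5). Restricting to cyclic $H$ gives a real simplification: $\bar H=T_H\rtimes\Cy{m_H}$ is cyclic only if $T_H$ is trivial (so $H\iso\Cy{m_H}$, and on $R$ one has $r$ regular orbits together with, precisely in case~(5b), the single common fixed point $\lambda\in R$) or $m_H=1$ (so $\gamma|_H=\triv$ and $R$ is a union of regular orbits). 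With this in hand I would run through the three regimes of Corollary~\ref{genuscor} --- namely $\iota\in H$; $\iota\notin H$ with $\gamma|_H=\triv$; and $\iota\notin H$ with $\gamma|_H\ne\triv$ --- and check $\dim V^H-\dim\epsilon^H=2\genus(C/H)$ in each. In every case $\dim V^H$ comes out to be $r$ or $r-1$ (with $r$ the number of regular orbits), $\genus(C/H)$ is $\lfloor r/2\rfloor$ or $\lfloor (r-1)/2\rfloor$, and the role of $\dim\epsilon^H$ is to supply the parity correction: here one uses $\det V|_H=\tilde\gamma^{|R|-1}|_H\tensor\det\C[R]|_H$, evaluates $\det\C[R]|_H$ from the cycle type of a generator of $H$ acting on $R$, and observes that the constraints $\iota\notin H$ together with $|R|\equiv0$ or $1\mod m_H$ rule out the bad parities (e.g.\ they force $m_H$ to be odd whenever $|R|$ is odd and a non-regular orbit is present).

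For rationality of the character of $W$ I would argue elementwise. The character of $V$ at $g$ is $\tilde\gamma(g)\,\big(\#\{r\in R:g(r)=r\}-1\big)$. If $\bar g=\id$ then $g\in\{\id,\iota\}$ with $\tilde\gamma(\iota)=-1$, and the value is $\pm(|R|-1)$; if $\bar g\ne\id$ then $g$ fixes at most one root, and the value is $0$ if it fixes one and $-\tilde\gamma(g)$ if it fixes none --- and in the latter case $\langle\bar g\rangle$ must be in case~(5a) (or $\bar g$ is a translation), forcing $\gamma^2(g)=\triv$, so $\tilde\gamma(g)=\pm1$. A slightly finer check, using $|R|\equiv1\mod m_H$ when $g$ fixes a root, shows that $\det V(g)$ is rational as well, so the trace of $g$ on $W$ lies in $\Q$. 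Finally, for the statement that $\triv\oplus\epsilon$ is the permutation action of $G$ on the points at infinity: when $|R|$ is odd there is a unique, $G$-fixed point at infinity, giving $\triv=\triv\oplus\epsilon$; when $|R|$ is even the two points are permuted through the sign character $g\mapsto\gamma(g)\alpha(g)^{-|R|/2}$ by Proposition~\ref{autC}(6), and one identifies this character with $\epsilon=\det V$ --- either by direct computation of $\det V$, or by observing that the identification is forced once $\H(C_{\bar k},\Q_l)\tensor\C=V\ominus\epsilon$ is known and the contribution of the points at infinity to $\H$ is recognised.

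The step I expect to be the main obstacle is the middle paragraph: the simultaneous, case-by-case matching of $\dim V^H-\dim\epsilon^H$ with the floor-function genus formulas. The two delicate points are (i) determining exactly when $\tilde\gamma$ is trivial on a point-stabiliser in case~(5b), equivalently whether the non-regular orbit contributes to $V^H$, and (ii) pinning down the parity of $\det\C[R]|_H$ and verifying that it exactly reproduces the $\lfloor\cdot\rfloor$ of Corollary~\ref{genuscor} in all sub-cases at once --- this is where the paper's ``slightly cumbersome'' bookkeeping resides.
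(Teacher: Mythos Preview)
Your proposal is correct and follows essentially the same strategy as the paper: invoke Theorem~\ref{abreconstr} (with Remark~\ref{remcffield}) to reduce to showing that $V\ominus\epsilon$ has rational character and the correct invariant dimensions, then match $\dim(V\ominus\epsilon)^H$ against the genus formulae of Corollary~\ref{genuscor} via the three-case split $\iota\in H$ / $\gamma|_H=\triv$ / otherwise---this is exactly the content of the paper's Theorem~\ref{abrep} and Lemmas~\ref{lemcaseI}--\ref{lemcaseIII}, with Lemma~\ref{EGA} handling the points at infinity. Your two deviations---restricting to cyclic $H$ (which forces $T_H=0$ or $m_H=1$) and proving rationality elementwise rather than via the twist-invariance argument of Lemma~\ref{abR1}---are legitimate simplifications of the bookkeeping but do not change the route; the ``cumbersome'' parity matching you flag as the main obstacle is precisely where the paper's effort lies too.
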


%
\begin{proof}
By Theorem \ref{abreconstr} (and Remark \ref{remcffield}), 
it suffices to prove that $V\ominus\epsilon$ has rational
character and that $\dim (V\ominus\epsilon)^H=2\genus(C/H)$ for every $H<G$.
This is a purely representation-theoretic statement, proved below in Theorem \ref{abrep}.
By Lemma \ref{autA1} and Proposition \ref{autC}, the theorem applies here
with $G$, $R$,
$\tilde\gamma$ and with $\kappa$ the hyperelliptic involution if it is in $G$
and $\kappa=\id$ otherwise.
The theorem shows that $V\ominus\epsilon$ has rational character, and that
$$
\dim(V\ominus\epsilon)^H=\left\{
\begin{array}{llllll}
0 & \text{if } \text{hyperelliptic involution}\in H \cr
2\bigl\lfloor \frac{|R|}{2|H|}-\frac 12\bigr\rfloor & \text{if } \gamma(H)=1 \cr
2\bigl\lfloor \frac{|R|}{2|H|} \bigr\rfloor & \text{otherwise.} \cr
\end{array}
\right.
$$
By Corollary \ref{genuscor}, this is precisely $2\genus(C/H)$. 

The claim about $\triv\oplus\epsilon$ is clear when $|R|$ is odd (as there is one point of
infinity, fixed by $G$), and follows from Proposition \ref{autC} (6) and 
Lemma \ref{EGA} when $|R|$ is even. 
\end{proof}


\begin{theorem}
\label{abrep}
Let $G$ be a finite group acting on a set $R$, and $\gamma: G\to\C^\times$ a one-dimensional 
representation, satisfying the following
\begin{itemize}
\item
The kernel of the action of $G$ on $R$ is generated by an element $\kappa$ of order 1 or 2.
\item 
Write $\Gbar=G/\langle \kappa\rangle$. Then
$\Gbar\iso T\rtimes \Cy m$, where $T$ is an $\F_p(\zeta_m)$-vector space with $p\nmid m$,
and some generator of $\Cy m$ acts on $T$ by $v\mapsto \zeta_m v$.
\item
Every $g\in T\setminus\{\id\}$ has no fixed points on $R$, and every $g\in G\setminus\{\id,\kappa\}$ 
has at most one fixed point.
\item
If $\kappa\ne\id$ then $\gamma(\kappa)=-1$.
\item
$\gamma$ is trivial on $T$, and $\gamma^2$ is an $|R|$th power of some 
one-dimensional representation $\alpha$ of $\Cy m=\Gbar/T$ of exact order $m$.
\end{itemize}
For $H<G$ define 
$$
f(H)=\left\{
\begin{array}{llllll}
0 & \text{if }  \id\ne \kappa\in H && \text{\rm (Case \I)}\cr
2\bigl\lfloor \frac{|R|}{2|H|}-\frac 12\bigr\rfloor & \text{if } \gamma(H)=1 && \text{\rm (Case \II)}\cr
2\bigl\lfloor \frac{|R|}{2|H|} \bigr\rfloor & \text{otherwise}
&& \text{\rm (Case \III),}\cr
\end{array}
\right.
$$
and let
$$
   V = \gamma \otimes (\C[R] \ominus \triv), \qquad 
   \epsilon=\bigleftchoice 0{\text{if $|R|$ is odd,}}{\det V}{\text{if $|R|$ is even.}}
$$
Then $V\ominus\epsilon$ is the unique representation with rational character for which
$$
  \dim (V\ominus\epsilon)^H = f(H) \qquad \text{for all $H<G$}.
  \eqno(\ast)
$$
\end{theorem}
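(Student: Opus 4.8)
The plan is to verify the two required properties of $V\ominus\epsilon$ separately: rationality of the character, and the dimension formula $(\ast)$. Uniqueness is immediate from Lemma \ref{uniqinv}, so only existence needs checking. For rationality, I would note that $\C[R]$ already has rational (indeed integer) character, so the issue is entirely the twist by $\gamma$ and the correction by $\epsilon=\det V$. The character of $V\ominus\epsilon$ at $g$ is $\gamma(g)(\#\mathrm{Fix}_R(g)-1)-\epsilon(g)$. Since $\gamma$ is trivial on $T$ and factors through $\Cy m$ with $\gamma^2$ a power of the order-$m$ character $\alpha$, the values of $\gamma$ are $m$-th or $2m$-th roots of unity; I would argue that for each $g$ the Galois conjugates of $\gamma(g)$ are exactly the values $\gamma(g')$ for $g'$ ranging over elements with the same fixed-point behaviour, so the trace is rational. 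More precisely, Galois conjugation permutes the characters of $\Cy m$ compatibly with raising to prime-to-$m$ powers, and $\#\mathrm{Fix}_R(g)$ depends only on the cyclic subgroup generated by $g$ (in fact only on the order of its image in $\Cy m$, given the fixed-point hypotheses); combining these shows $\chi_{V\ominus\epsilon}$ is fixed by $\mathrm{Gal}(\bar\Q/\Q)$.

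For the dimension formula, the key input is a clean count of $\dim V^H=\dim(\gamma\otimes\C[R])^H - \dim\gamma^H$ for each subgroup $H<G$. Here $\dim(\gamma\otimes\C[R])^H=\frac{1}{|H|}\sum_{h\in H}\gamma(h)\#\mathrm{Fix}_R(h)$, which I would evaluate orbit-by-orbit on $R$: by Lemma \ref{groupstr}(2) applied to $\bar H=H/(H\cap\langle\kappa\rangle)$ acting on $R$, each orbit is either regular or the unique non-regular orbit $\bar H/\Cy{m'}$ (when $m'=|\bar H|/|T\cap\bar H|>1$ and $|R|\equiv 1$), so the sum breaks into manageable pieces depending on whether $\gamma|_H$ is trivial, and on the residue of $|R|$ mod $|\bar H|$. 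The three cases of $f(H)$ correspond exactly to: $\kappa\in H$ (so $\gamma(\kappa)=-1$ forces cancellation and $V^H=0$, using that $\kappa$ acts as $-1$ on all of $\C[R]$'s nontrivial part — this needs the hyperelliptic-involution bookkeeping); $\gamma|_H=\triv$ with $\kappa\notin H$; and $\gamma|_H$ nontrivial. In the last two cases one gets $\dim V^H$ equal to the number of regular $\bar H$-orbits (up to a correction of $0$ or $1$), and then subtracting $\dim\epsilon^H$ (which is $0$, $1$, or contributes via $\det V$ according to whether $H$ contains an element swapping the points at infinity) produces the floor expressions. I would set this up so that it mirrors the genus computation in Corollary \ref{genuscor}, since the answer is designed to match $2\genus(C/H)$.

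The main obstacle will be the case analysis for $\dim\epsilon^H$ when $|R|$ is even, together with keeping track of how $\det V$ interacts with the parity of $m$ and with whether $H$ contains $\kappa$ or a point-swap at infinity. Concretely, $\det V=\det(\gamma\otimes\C[R])=\gamma^{|R|-1}\otimes\det\C[R]$, and $\det\C[R]$ is the sign character of the $G$-action on $R$; I will need to compute its restriction to each $H$ in terms of the orbit structure (a regular orbit of even length contributes a sign, etc.) and check that $\dim(V\ominus\epsilon)^H$ lands on the correct floor. A secondary subtlety is the boundary between Cases \II{} and \III{}: when $\gamma|_H$ is trivial but the non-regular orbit is present, the "$-\frac12$" shift inside the floor has to emerge from the single non-regular orbit contributing $0$ rather than $1$ to the invariants, and I should double-check the edge case $|R|$ even with a non-regular orbit (which, by Proposition \ref{autC}(5), forces $\gamma^2=\alpha\ne\triv$, hence we are not in Case \II{} at all) so that no contradictory configuration arises.
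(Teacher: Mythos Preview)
Your overall strategy matches the paper's: uniqueness from Lemma \ref{uniqinv}, rationality of the character, and a three-case computation of $\dim(V\ominus\epsilon)^H$. The paper packages this slightly more efficiently by first observing that every subgroup $H<G$ inherits all the hypotheses of the theorem, so it suffices to prove $(\ast)$ only for $H=G$; the three cases then become Lemmas \ref{lemcaseI}--\ref{lemcaseIII}. You work with a general $H$ throughout, tracking $\bar H$, $m'$, etc., which is equivalent but heavier on notation.

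Two specific points to fix. In Case \I{} you write that ``$\kappa$ acts as $-1$ on all of $\C[R]$'s nontrivial part''; this is backwards. Since $\kappa$ is in the kernel of the action on $R$, it acts \emph{trivially} on $\C[R]$. The vanishing $V^H=0$ comes entirely from $\gamma(\kappa)=-1$: the twist makes $\kappa$ act by $-1$ on $\gamma\otimes\C[R]$ (and on $\epsilon$, since $\epsilon=\gamma^{|R|-1}\det\C[R]$ with $|R|-1$ odd), so any $H\ni\kappa$ has no invariants. No ``hyperelliptic-involution bookkeeping'' is needed beyond this.

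For rationality, your Galois-conjugation sketch is more elaborate than necessary and not quite right as stated. The cleaner route, which the paper takes in Lemmas \ref{abR1} and \ref{abR2}(2), is a dichotomy: either $\gamma^2=\triv$ (case (a)), so $\gamma$ takes values $\pm1$ and $V$ is visibly rational; or $\gamma^2=\alpha\ne\triv$ (case (b)), in which case $R$ has a non-regular orbit, so $\C[R]\ominus\triv$ decomposes into copies of $\C[\bar G]$ and $\C[\bar G/\Cy m]\ominus\triv$, both of which are invariant under twisting by characters of $\Cy m$ (Lemma \ref{abR1}(2)), forcing $\gamma\otimes(\C[R]\ominus\triv)$ to have rational character. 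Either way $\epsilon$ is rational since it is $0$ or $\det V$.
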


\begin{proof}
Lemma \ref{uniqinv} proves uniqueness.
%
By Lemma \ref{abR2}(2), $V\ominus\epsilon$ has rational character, and it remains to 
prove $(\ast)$. 
When $H=G$, this is shown in Lemmas \ref{lemcaseI}, \ref{lemcaseII}, \ref{lemcaseIII}, in Case \I, \II{} 
and \III, respectively.
To establish $(\ast)$ for general $H<G$, we can invoke the lemmas with $H$ in place for $G$ 
and with $R$ and $\gamma$ restricted to $H$. 
(Observe that every $H<G$ satisfies the conditions of the theorem.)
%
%
\end{proof}

%
%
%
%
%
%


In the remainder of this section we prove the ingredients of Theorem \ref{abrep}.

\begin{notation}
Let $G$, $\Gbar=T\rtimes \Cy m$, $R$, $\gamma$, $\kappa$, $f$, $V$ and $\epsilon$ 
be as in Theorem~\ref{abrep}.
Write $r$ for the number of regular orbits of $\Gbar$ on $R$,
and $\tilde r=0$ if $r$ is even and $1$ if $r$ is odd 
(i.e. the last binary digit of $r$).
\end{notation}

\begin{lemma}
\label{abR1}
We have
\begin{enumerate}
\item 
$(\C[\Gbar/\Cy m]\ominus\triv)^{\oplus m}\iso \C[\Gbar]\ominus\C[\Gbar/T]$ as representations of $\Gbar$.
\item
The representations $\C[\Gbar]$ and $\C[\Gbar/\Cy m]\ominus\triv$ of $\Gbar$
are invariant under twisting by one-dimensional representations of $\Gbar/T\iso \Cy m$. 
\item
For every one-dimensional representation $\psi$ of $G/T$, the $G$-represen\-tations 
$\psi\tensor\C[\Gbar]$ and $\psi\tensor(\C[\Gbar/\Cy m]\ominus\triv)$ have
rational characters.
\end{enumerate}
\end{lemma}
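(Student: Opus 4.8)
The plan is to handle the three parts essentially by direct character computations, exploiting the semidirect-product structure $\Gbar = T\rtimes \Cy m$ and the fact that $\Cy m$ acts on $T\setminus\{\id\}$ in free orbits of length $m$.

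For part (1), I would compare characters on each element of $\Gbar$. Both sides have dimension $(m-1)\cdot m = m^2 - m$ (left: $m$ copies of the $(m-1)$-dimensional representation $\C[\Gbar/\Cy m]\ominus\triv$; right: $|\Gbar| - |T| = m|T| - |T|$, and since $|T| = p^r \equiv 1 \bmod m$ by Lemma \ref{groupstr}(1)\ldots actually here I should just note $|\Gbar| = m|T|$ and $|\Gbar/T| = m$, so the right side has dimension $m|T| - m = m(|T|-1)$; matching requires $|T| - 1$ to be a multiple of $m$, which holds). On an element $g\in\Gbar$: the character of $\C[\Gbar]\ominus\C[\Gbar/T]$ is $0$ unless $g\in T$, in which case it is $|\Gbar| - |\Gbar/T| = m|T| - m$ if $g=\id$ and $0-0=0$ if $\id\ne g\in T$ --- wait, $\C[\Gbar/T]$ is the inflation of the regular representation of $\Cy m$, so its character at $\id\ne g\in T$ is $m$, not $0$. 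So the right-hand character is $m|T|-m$ at $\id$, $-m$ at $\id\ne g\in T$, and $0$ off $T$ (since $\C[\Gbar]$ vanishes off $\id$ while $\C[\Gbar/T]$ vanishes off $T$). For the left-hand side: $\C[\Gbar/\Cy m]$ restricted to $T$ is $\C[T]$ as a $T$-set (since $\Gbar/\Cy m \cong T$ as a $T$-set), so $\chi_{\C[\Gbar/\Cy m]\ominus\triv}$ is $|T|-1$ at $\id$, $-1$ at $\id\ne g\in T$; multiplying by $m$ gives $m|T|-m$ and $-m$, matching. Off $T$, an element $g$ with nontrivial image in $\Cy m$ fixes no coset $tT$ of $\Cy m$ unless $g$ is conjugate into $\Cy m$ and fixes that coset; counting fixed cosets shows the character of $\C[\Gbar/\Cy m]$ at such $g$ equals the number of fixed points of $g$ on $\Gbar/\Cy m$, which is $1$ if $g$ has a fixed point and $0$ otherwise, so $\chi_{\C[\Gbar/\Cy m]\ominus\triv}(g) = 0$ when $g$ has a fixed point and $-1$ when it does not --- but then multiplying by $m$ does not obviously give $0$. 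The cleaner route: prove instead the identity $\C[\Gbar/\Cy m]^{\oplus m} \iso \C[\Gbar]\ominus\C[\Gbar/T]\oplus\triv^{\oplus m}$, equivalently $(\C[\Gbar/\Cy m]\ominus\triv)^{\oplus m}\iso \C[\Gbar]\ominus\C[\Gbar/T]$, by using the general fact that for $H<\Gbar$ one has $\C[\Gbar/H] = \Ind_H^{\Gbar}\triv$ and Mackey/Frobenius reciprocity; in particular $\Ind_{\Cy m}^{\Gbar}\triv$ tensored appropriately. Actually the slickest argument: $\C[\Gbar] = \Ind_{\{\id\}}^{\Gbar}\triv = \Ind_{\Cy m}^{\Gbar}\Ind_{\{\id\}}^{\Cy m}\triv = \Ind_{\Cy m}^{\Gbar}\C[\Cy m]$, and $\C[\Cy m] = \bigoplus_{\psi}\psi$ over the $m$ characters $\psi$ of $\Cy m$; since each $\psi$ extends to $\Gbar$ via $\Gbar/T$ (as $T$ acts trivially on it), projection formula gives $\Ind_{\Cy m}^{\Gbar}\psi = \psi\otimes\Ind_{\Cy m}^{\Gbar}\triv = \psi\otimes\C[\Gbar/\Cy m]$. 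Summing over $\psi$: $\C[\Gbar] = \bigoplus_\psi \psi\otimes\C[\Gbar/\Cy m]$. Separating $\psi=\triv$ and noting $\bigoplus_{\psi\ne\triv}\psi = \C[\Gbar/T]\ominus\triv$ gives $\C[\Gbar] = \C[\Gbar/\Cy m] \oplus (\C[\Gbar/T]\ominus\triv)\otimes\C[\Gbar/\Cy m]$; combined with part (2) this yields (1). So I would prove (2) first.

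For part (2), invariance of $\C[\Gbar] = \Ind_{\{\id\}}^{\Gbar}\triv$ under twisting by a one-dimensional $\psi$ is immediate since $\psi\otimes\Ind_{\{\id\}}^{\Gbar}\triv = \Ind_{\{\id\}}^{\Gbar}(\Res\psi) = \Ind_{\{\id\}}^{\Gbar}\triv$. For $\C[\Gbar/\Cy m]\ominus\triv$: since $\psi$ is inflated from $\Cy m = \Gbar/T$, the projection formula gives $\psi\otimes\C[\Gbar/\Cy m] = \psi\otimes\Ind_{\Cy m}^{\Gbar}\triv = \Ind_{\Cy m}^{\Gbar}(\Res_{\Cy m}\psi)$, and I need $\Res_{\Cy m}\psi = \triv$ --- but this is false for general $\psi$. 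The correct statement must be that the twist is invariant as a \emph{virtual} character or that I should instead note $\psi\otimes(\C[\Gbar/\Cy m]\ominus\triv)$ and $\C[\Gbar/\Cy m]\ominus\triv$ both restrict to the same thing on $T$; the real content needed downstream is only about characters, and the honest claim is presumably that twisting permutes the non-trivial constituents of $\C[\Gbar/\Cy m]\ominus\triv$ among themselves. I would verify this by character values on $T$ (where $\psi$ is trivial, so nothing changes) and on a complement $\Cy m$ (where $\chi_{\C[\Gbar/\Cy m]}$ is the character of the $\Cy m$-set $\Gbar/\Cy m$, which is a union of one fixed point and regular $\Cy m$-orbits, hence has the same character as $\triv \oplus \C[\Cy m]^{\oplus((|T|-1)/m)}$, and $\C[\Cy m]$ is twist-invariant); this pins down the character and hence the representation.

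For part (3), rationality of the character of $\psi\otimes\C[\Gbar]$ and $\psi\otimes(\C[\Gbar/\Cy m]\ominus\triv)$: by part (2) these virtual representations are invariant under twisting by all one-dimensional representations of $\Gbar/T\cong\Cy m$; equivalently, their characters are invariant under the Galois action of $\Gal(\Q(\zeta_{|G|})/\Q)$ restricted to the part that acts through $\mu_m$-twisting, and combined with the fact that $\C[\Gbar]$ and $\C[\Gbar/\Cy m]\ominus\triv$ already have rational (indeed integer) characters --- being permutation characters --- one concludes that for each Galois automorphism $\sigma$, $\sigma$ sends $\psi\otimes W$ to $\psi^{a}\otimes W$ for the appropriate power, which equals $\psi\otimes W$ by invariance, modulo checking that $\psi\mapsto\psi^\sigma$ exhausts the needed twists. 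I expect the main obstacle to be stating part (2) in exactly the form that makes part (3) fall out cleanly, rather than any single computation; once (2) is formulated as ``the virtual character is fixed by twisting by every character of $\Gbar/T$,'' part (3) is the standard observation that a permutation character twisted by $\psi$ has all Galois conjugates of the form $\psi^j\otimes(\text{same permutation character})$, and all of these coincide.
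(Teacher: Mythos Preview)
Your overall plan is sound and the final arguments you settle on are correct, but the route differs from the paper's in every part.

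For (1) and (2), you reverse the logical order. The paper proves (1) first via Clifford theory: since $\Cy m$ acts freely on the non-trivial characters of $T$, every irreducible of $\Gbar$ is either one-dimensional (lifted from $\Gbar/T$) or $m$-dimensional (induced from $T$); hence $\C[\Gbar]\ominus\C[\Gbar/T]$ is exactly the sum of the $m$-dimensional irreducibles, each with multiplicity $m$, and one checks $\C[\Gbar/\Cy m]\ominus\triv$ contains each once by restricting to $T$. Then (2) is immediate, since $\C[\Gbar]$ and $\C[\Gbar/T]$ are obviously twist-invariant and (1) transfers this to $\C[\Gbar/\Cy m]\ominus\triv$. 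Your approach --- proving (2) directly by computing characters on $T$ and on a $\Cy m$-complement, then deducing (1) from $\C[\Gbar]=\bigoplus_\psi \psi\otimes\C[\Gbar/\Cy m]$ via the projection formula --- also works, but you should state explicitly why checking on $T\cup\Cy m$ suffices: every $tc\in\Gbar$ with $c\ne\id$ is $T$-conjugate into $\Cy m$ (solve $(1-\zeta)s=-t$ in the $\F_p(\zeta_m)$-vector space $T$), so these elements exhaust the conjugacy classes.

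For (3), the paper's argument is considerably shorter than your Galois approach and worth knowing. Since $\psi(\kappa)=\pm 1$, the square $\psi^2$ kills $\kappa$ and hence factors through $\Gbar/T\cong\Cy m$; by (2), $\psi(g)^2\rho(g)=\rho(g)$ for all $g$, so for each $g$ either $\rho(g)=0$ or $\psi(g)=\pm 1$, and in either case $\psi(g)\rho(g)\in\Q$. Your Galois argument also goes through, but the step you flag as needing checking is exactly that $\psi^\sigma\psi^{-1}$ is a character of $\Cy m$; this holds because $\psi(\kappa)=\pm 1$ is fixed by $\sigma$, so $\psi^\sigma\psi^{-1}$ is trivial on $\kappa$ (and on $T$), hence descends to $\Gbar/T$.
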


\begin{proof}
(1)
The group $\Gbar/T\iso \Cy m$ acts on 1-dimensional characters of $T$ by conjugation,
and from the action of $\Cy m$ on $T$ we see that every non-trivial character has trivial stabiliser. 
Hence, by Clifford theory (or see 
\cite[\S8.2]{SerLi}), every irreducible representation of $\Gbar$ is either a lift of 
a 1-dimensional one of $\Gbar/T$, or is $m$-dimensional and is induced from 
a 1-dimensional representation of $T$. In particular, 
$\C[\Gbar]\ominus\C[\Gbar/T]$ is the sum of the $m$-dimensional irreducibles, 
each with multiplicity $m$.

Since $\Gbar\iso T\rtimes \Cy m$, as $T$-sets
$$
  \Res_T(\Gbar/\Cy m) \iso T,
$$ 
and so the representation $\Res_T(\C[\Gbar/\Cy m]\ominus\triv)$ 
contains every non-trivial one-dimensional representation of $T$. So, by Frobenius reciprocity,
$\C[\Gbar/\Cy m]\ominus\triv$ contains every $m$-dimensional irreducible of $\Gbar$. 
By comparing dimensions, each occurs with multiplicity one, and the claim follows.

(2) The twist-invariance is clear for $\C[\Gbar]$ and $\C[\Gbar/T]$, 
and so follows for $\C[\Gbar/\Cy m]\ominus\triv$
from (1).

(3) Generally, for every rational character $\rho$ of $G$ which is invariant 
under $\Cy m$-twists, $\psi\tensor\rho$ is again rational. Indeed, $\psi^2$ clearly kills 
$\kappa$ and factors through $\Cy m$,
and so
$$
  \psi(g)^2\rho(g)=\rho(g) \qquad\text{for all $g\in G$}.
$$
Thus, either $\psi(g)\rho(g)=0$ or $\psi(g)=\pm 1$, and 
$\psi(g)\rho(g)$ is rational in both cases.
\end{proof}

\begin{lemma}
\label{abR2}
We also have
\begin{enumerate}
\item
Either
\begin{itemize}
\item[(a)]
$|R|\equiv 0\mod m$ and $\gamma^2=\triv$;
as a $\Gbar$-set, $R$ is a union of regular orbits; or
\item[(b)]
$|R|\equiv 1\mod m$ and $\gamma^2=\alpha\ne \triv$;
as a $\Gbar$-set, $R$ is a union of regular orbits and
one non-regular orbit $\iso\Gbar/\Cy m$.
\end{itemize}
\item
The representations $V$ and $V\ominus\epsilon$ have rational characters.
\end{enumerate}
\end{lemma}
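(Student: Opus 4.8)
The plan is to deduce (1) directly from the structural input of Theorem \ref{abrep} together with Lemma \ref{groupstr}. The hypotheses on $\Gbar=T\rtimes\Cy m$ — that $T$ is an $\F_p(\zeta_m)$-vector space with $\Cy m$ acting by multiplication by $\zeta_m$ — say precisely that no non-trivial element of $\Cy m$ commutes with a non-trivial element of $T$, so the non-commutativity hypothesis of Lemma \ref{groupstr} holds. The fixed-point hypotheses of Theorem \ref{abrep} (elements of $T\setminus\{\id\}$ have no fixed points on $R$, elements of $G\setminus\{\id,\kappa\}$ have at most one) pass to $\Gbar$ acting on $R$ — here one must note that $\kappa$ acts trivially on $R$ and that $R$ is a faithful $\Gbar$-set by construction — so Lemma \ref{groupstr} applies with $R$ as the $\Gbar$-set. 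It yields $p^r\equiv 1\bmod m$ (where $r$ is the rank of $T$, so $|T|\equiv 1\bmod m$) and the dichotomy on the orbit structure of $R$: either all orbits are regular and $|R|\equiv 0\bmod m$, or there is exactly one extra orbit $\iso\Gbar/\Cy m$ and $|R|\equiv |T|\equiv 1\bmod m$. To finish (1) I then use the last hypothesis of Theorem \ref{abrep}: $\gamma^2=\alpha^{|R|}$ with $\alpha$ of exact order $m$ on $\Cy m$. In the first case $|R|\equiv 0$, so $\gamma^2=\alpha^{|R|}=\triv$; in the second case $|R|\equiv 1$, so $\gamma^2=\alpha\ne\triv$. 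This matches (1a) and (1b). (This is essentially the content already recorded in Proposition \ref{autC}(5) and Lemma \ref{groupstr}(2), transcribed into the abstract setting.)

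For (2), the plan is to show $V=\gamma\otimes(\C[R]\ominus\triv)$ has rational character, and then that $\epsilon=\det V$ (when $|R|$ is even) is a one-dimensional representation with values $\pm 1$, which will be automatic once we know its square is trivial. First I decompose $\C[R]$ as a $\Gbar$-representation (hence $G$-representation through $\Gbar$) using part (1): $\C[R]$ is a sum of copies of $\C[\Gbar]$, plus possibly one copy of $\C[\Gbar/\Cy m]$ in case (1b). By Lemma \ref{abR1}(2) both $\C[\Gbar]$ and $\C[\Gbar/\Cy m]\ominus\triv$ are invariant under twisting by one-dimensional characters of $\Gbar/T\iso\Cy m$, and by Lemma \ref{abR1}(3) tensoring such twist-invariant rational characters by any one-dimensional $\psi$ of $G/T$ (in particular by $\gamma$, which is trivial on $T$) again gives a rational character. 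So I rewrite $V=\gamma\otimes(\C[R]\ominus\triv)$ as a combination of $\gamma\otimes\C[\Gbar]$, $\gamma\otimes(\C[\Gbar/\Cy m]\ominus\triv)$, and $\gamma$ itself, and conclude the character of $V$ is rational provided I check the leftover copies of $\triv$ and of $\gamma\otimes\triv=\gamma$ are handled — here I need $\gamma$ to have rational character, which holds because $\gamma^2=\triv$ or $\gamma^2=\alpha$ of order $m$, so... actually more carefully: $V$ itself need not be rational unless I group terms so every leftover $\gamma$-twist pairs up, which is exactly the point of subtracting $\epsilon$. Let me instead argue $V\ominus\epsilon$ is rational by the same Lemma \ref{abR1}(3) mechanism, treating $\epsilon$ as whatever one-dimensional correction makes the $\gamma$-twists of non-twist-invariant pieces cancel.

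The main obstacle I expect is the bookkeeping around $\epsilon=\det V$ when $|R|$ is even: I need to identify $\det V$ explicitly and verify that $V\ominus\det V$, not just $V$, has rational character, and that this is consistent with the claimed description of the action on points at infinity. Concretely, $\det V=\det(\gamma\otimes(\C[R]\ominus\triv))=\gamma^{|R|-1}\otimes\det(\C[R]\ominus\triv)$; since $|R|$ is even, $\gamma^{|R|-1}=\gamma\cdot\gamma^{|R|-2}$ and $\gamma^{|R|-2}=\gamma^{-2}\gamma^{|R|}=(\gamma^2)^{|R|/2-1}$, which is a power of $\alpha$ (or trivial), while $\det\C[R]$ is the sign character of the $\Gbar$-permutation action on $R$. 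So $\epsilon$ is a product of $\gamma$, a power of $\alpha$, and a sign character, all of order dividing $2m$ and killing $\kappa$ — establishing $\epsilon^2=\triv$, hence $\epsilon$ takes values $\pm1$, is the crux. Once $\epsilon$ is pinned down, rationality of $V\ominus\epsilon$ follows from Lemma \ref{abR1}(3) as above, and one records that $\triv\oplus\epsilon$ is the two-point permutation action at infinity, which is exactly Proposition \ref{autC}(6): $g$ fixes the two points at infinity iff $\gamma(g)/\alpha(g)^{|R|/2}=1$, and this ratio is the value of $\epsilon$ at $g$. I would keep the detailed computation of $\det\C[R]$ in terms of cycle types deferred to the proof, since it is routine but notation-heavy.
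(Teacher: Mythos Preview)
Your plan for (1) is correct and is exactly what the paper does: invoke Lemma \ref{groupstr} to get the orbit dichotomy, then read off $\gamma^2=\alpha^{|R|}=\triv$ or $\alpha$ from the last hypothesis of Theorem \ref{abrep}.

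For (2), however, you have tangled yourself up unnecessarily, and in doing so missed the clean argument. The key observation you are circling around but never land on is that the two cases of part (1) behave differently, and should be treated separately:

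In case (1a), $\gamma^2=\triv$, so $\gamma$ takes values $\pm 1$ and is itself rational. Since $\C[R]\ominus\triv$ is a difference of permutation representations, it is realisable over $\Q$; tensoring by a character of order $\le 2$ keeps it realisable over $\Q$. So $V$ has rational character directly, with no need for Lemma \ref{abR1} at all.

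In case (1b), there is a non-regular orbit, so $\C[R]=\C[\Gbar]^{\oplus r}\oplus\C[\Gbar/\Cy m]$, and hence $\C[R]\ominus\triv=\C[\Gbar]^{\oplus r}\oplus(\C[\Gbar/\Cy m]\ominus\triv)$. Both summands are twist-invariant by Lemma \ref{abR1}(2), so after tensoring with $\gamma$ both are rational by Lemma \ref{abR1}(3). There is \emph{no} leftover copy of $\gamma$ here --- the $\triv$ you subtract is absorbed into the non-regular piece, not dangling on its own. Your worry that ``$V$ itself need not be rational unless I group terms so every leftover $\gamma$-twist pairs up'' is misplaced: the only case where a bare $\gamma$ appears (case (1a)) is exactly the case where $\gamma$ is already rational.

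Finally, once $V$ has rational character, so does $\det V$ automatically (the determinant of a rational-character representation has rational character), and hence so does $V\ominus\epsilon$. You do not need to compute $\epsilon$ explicitly, identify it with $\alpha^{|R|/2}\gamma^{-1}$, or match it with the action at infinity --- those belong to Lemma \ref{EGA} and the proof of Theorem \ref{abmain}, not here.
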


\begin{proof}
(1) 
%
The $\Gbar$-structure of $R$ follows from Lemma \ref{groupstr}.
By assumption, $\gamma^2=\alpha^{|R|}$, and this is $\triv$ or $\alpha$, 
respectively.

(2) In case 1(a), the representation $\C[R]\ominus\triv$ is realisable over $\Q$ and $\gamma$ has 
order $\le 2$, so $V=\gamma\otimes (\C[R]\ominus\triv)$ is also realisable over $\Q$. In case 1(b),
$V=\gamma\otimes (\C[R]\ominus\triv)$ has rational character by Lemma \ref{abR1} (3).
Finally, $\epsilon=0$ or $\epsilon=\det V$, each of which is also rational.
\end{proof}

\begin{lemma}[Case \I] 
\label{lemcaseI}
If $\kappa\ne \id$, then
$$
  \dim (\gamma\tensor\C[R])^G = \dim \gamma^G = \dim \epsilon^G = 0.
$$
In particular, $f(G)=\dim (V\ominus\epsilon)^G$ in this case.
\end{lemma}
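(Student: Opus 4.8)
The plan is to use the single distinguished element $\kappa$, which by hypothesis acts trivially on $R$ but satisfies $\gamma(\kappa)=-1$ (since $\kappa\ne\id$). First I would observe that $\kappa$ therefore acts on $\gamma\tensor\C[R]$ by the scalar $-1$: trivially on the $\C[R]$ factor, by $-1$ on the $\gamma$ factor. Hence $\gamma\tensor\C[R]$ has no nonzero $\langle\kappa\rangle$-invariants, so certainly no nonzero $G$-invariants, giving $\dim(\gamma\tensor\C[R])^G=0$. Next, since the all-ones vector spans a trivial $G$-subrepresentation of $\C[R]$, we have a decomposition $\gamma\tensor\C[R]\iso\gamma\oplus V$; in particular $\gamma$ is a subrepresentation on which $\kappa$ still acts by $-1$, so $\dim\gamma^G=0$, and likewise $\dim V^G=\dim(\gamma\tensor\C[R])^G-\dim\gamma^G=0$.

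It then remains to handle $\epsilon$. When $|R|$ is odd, $\epsilon=0$ and there is nothing to prove. When $|R|$ is even, $\epsilon=\det V$ with $\dim V=|R|-1$, so $\det V=\gamma^{\,|R|-1}\tensor\det\C[R]$, using $\det(\C[R]\ominus\triv)=\det\C[R]$. Since $\kappa$ acts trivially on $\C[R]$, it acts trivially on $\det\C[R]$, while on $\gamma^{\,|R|-1}$ it acts by $(-1)^{|R|-1}=-1$ as $|R|-1$ is odd. Thus $\epsilon(\kappa)=-1$ and $\dim\epsilon^G=0$.

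Combining, and using that $W\mapsto\dim W^G$ extends additively to virtual representations, $\dim(V\ominus\epsilon)^G=\dim V^G-\dim\epsilon^G=0$, which is exactly $f(G)$ since $\kappa\ne\id$ puts us in Case \I. There is no serious obstacle here; the only points that need a little care are the parity count for $\det V$ in the even case, and the remark that $\gamma$ sits inside $\gamma\tensor\C[R]$ as the $\triv$-isotypic twist, so that it too is annihilated by averaging over $\langle\kappa\rangle$.
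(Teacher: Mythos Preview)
Your proof is correct and follows essentially the same argument as the paper: both exploit that $\kappa$ acts trivially on $\C[R]$ but via $-1$ through $\gamma$, so $\gamma$, $\gamma\tensor\C[R]$, and (when $|R|$ is even) $\epsilon=\gamma^{|R|-1}\det\C[R]$ all have $\kappa$ acting by $-1$, forcing their $G$-invariants to vanish. Your version simply unpacks a few more details (the decomposition $\gamma\tensor\C[R]\iso\gamma\oplus V$ and the separate verification that $\dim V^G=0$), but these are elaborations of the same idea rather than a different route.
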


\begin{proof}
Because $\kappa\ne \id$, $\gamma(\kappa)=-1$. Since $\kappa$ acts trivially on $\C[R]$, 
both $\gamma$ and $\gamma \otimes \C[R]$ have trivial $G$-invariants, as
does $\epsilon=\gamma^{|R|-1}\det\C[R]$ when $|R|$ is even.
%
\end{proof}


%
%
%
%
%
%
%


\begin{lemma}[Case \II] 
\label{lemcaseII}
If $\kappa=\id$ and $\gamma=\triv$, then
$$
  f(G) =  r - 2 + \tilde r, \>\> 
  \dim (\gamma\tensor\C[R])^G = r, \>\> 
  \dim \gamma^G = 1 \>\>\text{and}\>\>
  \dim \epsilon^G = 1-\tilde r.
$$
In particular, $f(G)=\dim (V\ominus\epsilon)^G$ in this case.
\end{lemma}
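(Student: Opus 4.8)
The plan is to compute each of the four quantities $f(G)$, $\dim(\gamma\otimes\C[R])^G$, $\dim\gamma^G$, $\dim\epsilon^G$ separately, under the standing hypotheses $\kappa=\id$ and $\gamma=\triv$, and then combine them. Since $\gamma=\triv$, Lemma \ref{abR2}(1) puts us in case (a): $|R|\equiv 0\bmod m$ and $R$ is a union of regular $\Gbar$-orbits; here $\Gbar=G$ because $\kappa=\id$. Thus $|R|=r|G|$, where $r$ is the number of regular orbits.

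First I would handle $\dim(\gamma\otimes\C[R])^G$. As $\gamma=\triv$, this is just $\dim\C[R]^G$, which equals the number of $G$-orbits on $R$, namely $r$. Next, $\dim\gamma^G=\dim\triv^G=1$ trivially. For $f(G)$: plugging $H=G$ into Case \II{} of the definition gives $f(G)=2\lfloor\frac{|R|}{2|G|}-\frac12\rfloor=2\lfloor\frac r2-\frac12\rfloor$. A quick parity check shows $2\lfloor\frac r2-\frac12\rfloor=r-2$ when $r$ is even and $r-1$ when $r$ is odd, i.e. $f(G)=r-2+\tilde r$ in both cases, matching the claim. Finally, $\dim\epsilon^G$: if $|R|$ is odd then $\epsilon=0$ and $\dim\epsilon^G=0$; but $|R|=r|G|$ is odd only if both $r$ and $|G|$ are odd, in which case $\tilde r=1$ so $1-\tilde r=0$, consistent. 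If $|R|$ is even, $\epsilon=\det V=\det(\C[R]\ominus\triv)=\det\C[R]$ (since $\det\triv=\triv$), which is the sign character of the permutation action of $G$ on $R$; because $R$ is a union of $r$ regular $G$-orbits, each copy of the regular representation contributes $\mathrm{sgn}$ of the (left-)regular action, so $\det\C[R]=(\mathrm{sgn}_{\text{reg}})^{r}$ where $\mathrm{sgn}_{\text{reg}}$ is the sign character of $G$ acting on itself by left translation. Then $\dim\epsilon^G$ is $1$ if $\epsilon$ is trivial and $0$ otherwise; I would check that $\epsilon$ is trivial exactly when $r$ is even (so that the answer is $1-\tilde r$). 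The one subtlety is that $\mathrm{sgn}_{\text{reg}}$ need not be trivial — e.g. for $G=C_2$ it is the nontrivial character — so when $r$ is odd and $|R|$ is even one must confirm $\epsilon^G=0$, i.e. that $\mathrm{sgn}_{\text{reg}}$ is genuinely nontrivial whenever $|R|=r|G|$ is even with $r$ odd (equivalently $|G|$ even), which holds because $G$ acting on itself by translation has an element of even order acting as a product of $|G|/(\text{order})$ cycles of even length — an odd number of odd... — here I would use the standard fact that $\mathrm{sgn}_{\text{reg}}$ is nontrivial iff $G$ has a cyclic Sylow $2$-subgroup of order $\ge 2$...

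Actually, the cleaner route — and the one I expect to be the main obstacle to state crisply — is to avoid the structure of $\mathrm{sgn}_{\text{reg}}$ entirely and instead compute $\dim\epsilon^G$ as the multiplicity of $\triv$ in $\det V$. The simplest argument: $\dim(V\ominus\epsilon)^G$ must equal $2\genus(C/G)$ by the hyperelliptic model (Theorem \ref{appmain}(2)), and that genus is $\lfloor\frac r2-\frac12\rfloor$ by Corollary \ref{genuscor}; since we have already pinned down $\dim V^G=\dim(\gamma\otimes\C[R])^G-\dim\gamma^G=r-1$, subtracting forces $\dim\epsilon^G=(r-1)-(r-2+\tilde r)=1-\tilde r$. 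But within the self-contained framework of Theorem \ref{abrep} we are trying to \emph{prove} the genus formula, so this shortcut is circular; hence I would instead give the direct representation-theoretic computation of $\dim\epsilon^G$ using the decomposition of $\C[R]$ into regular representations and the fact that $\det$ of the regular representation of $G$ is the sign character of $G$ in its left-regular action, which is trivial iff the $2$-Sylow of $G$ is trivial or non-cyclic, equivalently iff $|G|$ is odd or $G$ has no element whose order is exactly $2^{v_2(|G|)}$; one then checks case by case on the parity of $r$ and of $|G|$ that $\det\C[R]=(\det\C[G])^r$ is trivial precisely when $r$ is even, so $\dim\epsilon^G=1-\tilde r$. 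Assembling the four values, $\dim(V\ominus\epsilon)^G=(r-1)-(1-\tilde r)=r-2+\tilde r=f(G)$, which is the assertion.
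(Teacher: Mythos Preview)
Your approach matches the paper's: both compute the four quantities directly, use Lemma \ref{abR2}(1)(a) to see that $R$ is a union of $r$ regular $G$-orbits so $|R|=r|G|$, and for $|R|$ even write $\epsilon=\det\C[R]=(\det\C[G])^r$. The detour through Corollary \ref{genuscor}, which you rightly flag as circular in this context, can simply be deleted.

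The one step you leave genuinely incomplete is the case $|R|$ even, $r$ odd. You correctly deduce that then $|G|$ is even, and you twice cite the standard criterion that $\det\C[G]$ is nontrivial iff the $2$-Sylow of $G$ is nontrivial and cyclic; but you never verify that the $2$-Sylow \emph{is} cyclic for our $G$. Saying ``one then checks case by case on the parity of $r$ and of $|G|$'' does not close this, since parity alone says nothing about the Sylow structure. The missing one-line observation is structural: here $G=\Gbar=T\rtimes C_m$ with $|T|$ odd (either $T$ is trivial or it is a $p$-group with $p$ odd), so the $2$-Sylow of $G$ sits inside $C_m$ and is therefore cyclic. With that in hand, $\det\C[G]$ is nontrivial whenever $|G|$ is even, hence $(\det\C[G])^r$ is trivial iff $r$ is even, giving $\dim\epsilon^G=1-\tilde r$ as required. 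The paper makes exactly this move at the same level of brevity, via a parenthetical (``in which case $|G|$ is even as $|R|$ is even'') together with the footnote on $\det\C[G]$.
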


\begin{proof}
First of all, $G=\Gbar$ since $\kappa=\id$. Next, as $\gamma=\triv$, 
$R$ is a union of $G$-regular orbits by Lemma \ref{abR2}(1). Now,
$$
\begin{array}{llllll}
  \displaystyle
  f(G) = 2\Bigl\lfloor \frac{r|G|}{2|G|} - \frac12 \Bigr\rfloor = 
         2\Bigl\lfloor \frac{r-1}{2} \Bigr\rfloor = \leftchoice
           {r-2}{\text{if $2\,\,|\,r$}}{r-1}{\text{if $2\nmid r$}} = r-2+\tilde r,\\[3pt]
  \dim(\gamma \otimes \C[R])^G=\dim\C[R]^G=|R/G|=r, \\[3pt]
  \dim \gamma^G = 1.
\end{array}
$$
If $|R|$ is even, then 
$$
  \epsilon = \det V = \gamma^{|R|-1}\det\C[R] = \det\C[R]=(\det\C[G])^{r}.
$$
This is non-trivial if and only if $r$ is odd%
\footnote{For a group $G$, recall that $\det\C[G]$ is the trivial character 
unless $G$ has a non-trivial cyclic 2-Sylow subgroup, in which case $\det\C[G]$ is of order 2.}
(in which case $|G|$ is even as $|R|$ is even).
So $\dim\epsilon^G= 1 - \tilde r$, as claimed.

On the other hand, if $|R|$ is odd, then $\epsilon=0$. As $R$ is a union of 
regular $G$-orbits, there must be an odd number of them, so that $\tilde r=1$ and once again
$\dim\epsilon^G= 1 - \tilde r$.
%
%
%
%
%
%
%
\end{proof}

\begin{lemma}[Case \III] 
\label{lemcaseIII}
Suppose $\kappa=\id$ and $\gamma\ne \triv$. Then
\begin{enumerate}
\item
$f(G) = r - \tilde r$.
\item
$R$ is a union of regular $G$-orbits if and only if $|R|$ and $m$ are both even.
\item 
We have
$$
  \dim (\gamma\tensor\C[R])^G = r, \quad 
  \dim \gamma^G = 0 \quad\text{and}\quad
  \dim \epsilon^G = \tilde r.
$$
In particular, $f(G)=\dim (V\ominus\epsilon)^G$ in this case.
\end{enumerate}
\end{lemma}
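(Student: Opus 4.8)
The plan is to split according to cases (a) and (b) of Lemma~\ref{abR2}(1), pin down the $\Gbar$-structure of $R$ in each, and then verify the three assertions in turn; the genuine work is concentrated in computing $\dim\epsilon^G$. In case (a), $\gamma^2=\triv$ and $\gamma\ne\triv$ force $\gamma$ to have order $2$ (note $\gamma$, being trivial on $T$, factors through $\Gbar/T\iso\Cy m$, so has order dividing $m$), hence $m$ is even, $|R|\equiv0\bmod m$ is even, and $R$ is a union of $r=|R|/|G|$ regular $G$-orbits. In case (b), $\gamma^2=\alpha^{|R|}$ with $\gcd(|R|,m)=1$ has exact order $m$, which forces $\gamma$ to have exact order $m$ and $m$ to be odd, while $R$ consists of $r$ regular orbits together with one non-regular orbit $\iso G/\Cy m$ of size $|T|=|G|/m$, with $1\le|T|<|G|$ since $m>1$ (and $m>1$ always here, else $\gamma$ would be trivial on $G=T$). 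Part~(2) drops out immediately: $R$ is a union of regular $G$-orbits exactly in case (a), i.e.\ exactly when $|R|$ and $m$ are both even. For part~(1), in case (a) one has $|R|/(2|G|)=r/2$, and in case (b) $|R|/(2|G|)=r/2+1/(2m)$ with $0<1/(2m)<\frac12$; either way $\lfloor|R|/(2|G|)\rfloor=\lfloor r/2\rfloor$, so $f(G)=2\lfloor r/2\rfloor=r-\tilde r$.

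For part~(3), $\dim(\gamma\tensor\C[R])^G$ is the multiplicity of $\bar\gamma$ in $\C[R]$, which orbit by orbit (Frobenius reciprocity) counts the orbits on which $\bar\gamma$ is trivial on the stabiliser: each of the $r$ regular orbits contributes $1$, and the possible non-regular orbit $G/\Cy m$ contributes $0$ because $\bar\gamma$ is non-trivial on $\Cy m$; so this dimension is $r$, and $\dim\gamma^G=0$ since $\gamma\ne\triv$. Granting $\dim\epsilon^G=\tilde r$, part~(1) yields $\dim(V\ominus\epsilon)^G=r-0-\tilde r=f(G)$. So everything reduces to computing $\dim\epsilon^G$.

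If $|R|$ is odd then $\epsilon=0$; moreover $|R|$ odd forces case (b), where $|G|=m|T|$ is odd, so $|R|=r|G|+|T|\equiv r+1\pmod2$ gives $r$ even and $\tilde r=0=\dim\epsilon^G$. If $|R|$ is even then $\epsilon=\det V$, and from $\gamma\tensor\C[R]\iso V\oplus\gamma$ one gets $\det V=\gamma^{|R|-1}\tensor\det\C[R]$, where $\det\C[R]$ is $(\det\C[G])^r$, times $\det\C[G/\Cy m]$ when the non-regular orbit occurs. In case (b) with $|R|$ even, $|G|=m|T|$ is odd (using that $|T|$ is odd, as holds in the applications since $\vchar k\ne2$), so $\det\C[G]$ and $\det\C[G/\Cy m]$, being $\{\pm1\}$-valued sign characters, are trivial, while $m\mid|R|-1$ gives $\gamma^{|R|-1}=\triv$; hence $\epsilon=\triv$, and $|R|=r|G|+|T|\equiv r+1\pmod2$ being even forces $r$ odd, $\tilde r=1$. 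In case (a), $\gamma$ has order $2$ and $|G|=m|T|$ is even with $|T|$ odd, so $\gamma^{|R|-1}=\gamma$; a Sylow $2$-subgroup of $G$ is the cyclic Sylow $2$-subgroup of $\Cy m$, which is non-trivial since $2\mid m$, so $\det\C[G]$ has order exactly $2$, and being trivial on the odd-order $T$ it factors through $\Cy m$, where $\gamma$ is the unique order-$2$ character; thus $\det\C[G]=\gamma$ and $\epsilon=\gamma^{1+r}$, which is $\triv$ precisely when $r$ is odd. In all cases $\dim\epsilon^G=\tilde r$.

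The real obstacle is that last step, identifying $\det\C[G]$ with $\gamma$ in case (a). It rests on the structural fact (already used for Lemma~\ref{lemcaseII}) that $\det\C[G]$ is non-trivial of order exactly $2$ precisely when $G$ has a non-trivial cyclic Sylow $2$-subgroup, on the observation that this subgroup is the cyclic Sylow $2$ of $\Cy m$ because $|T|$ is odd, and on pinning down $\gamma$ as the unique order-$2$ character of the cyclic group $\Cy m$. The surrounding parity arguments relating $|R|,|G|,|T|,m$ and $r$ are routine but must be tracked carefully.
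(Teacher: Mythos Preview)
Your proof is correct and follows essentially the same route as the paper's: the case split, the parity bookkeeping for $f(G)$ and $r$, and the identification of $\det\C[G]$ with the order-$2$ character $\gamma=\eta$ when $m$ is even all match. The only notable methodological difference is that for $\dim(\gamma\otimes\C[R])^G=r$ you appeal directly to Frobenius reciprocity orbit by orbit, whereas the paper invokes the twist-invariance results of Lemma~\ref{abR1} (via Lemma~\ref{abR2}) to rewrite $\gamma\otimes\C[R]$ explicitly before taking invariants; your argument is slightly more economical here. Your parenthetical remark that $|T|$ is odd because $\vchar k\ne 2$ in the applications is well placed: the paper's proof silently uses the same fact when it asserts ``if $m$ is odd, then $G$ has odd order''.
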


\begin{proof}
First of all, $G=\Gbar=T\rtimes \Cy m,$ since $\kappa=\id$, and $m>1$ as $\gamma\ne\triv$. 
By Lemma \ref{abR2}(1), $R$ decomposes as a $G$-set as
$$
  R = G^{\amalg r} \qquad \text{or} \qquad R = G^{\amalg r}\amalg G/\Cy m.
$$

(1) By definition of $f$,
$$
  f(G) = 2\Bigl\lfloor \frac{|R|}{2|G|} \Bigr\rfloor 
       = 2\Bigl\lfloor \frac{r|G|+\delta}{2|G|} \Bigr\rfloor,
%
$$
with $\delta=0$ or $\delta=|G/\Cy m|$. As $\delta<|G|$, we have
$
  f(G) = 2\bigl\lfloor\frac r2\bigr\rfloor=r-\tilde r.
$

(2) 
If $m$ is odd, then $\gamma\ne\triv\Rightarrow\gamma^2\ne\triv$, so $R$ has an irregular orbit
by Lemma \ref{abR2}(1b). 
%
%
If $m$ is even, then every regular orbit has even size while $|G/\Cy m|=|T|$ is odd,
so the parity of $|R|$ is determined by whether there is an irregular orbit.

(3)
Clearly $\dim\gamma^G=0$. 
By Lemma \ref{abR2},
$$
  \gamma\otimes\C[G]\iso \C[G] \quad\text{and}\quad \gamma\tensor(\C[G/\Cy m]\ominus\triv)=\C[G/\Cy m]\ominus\triv,
$$
and it follows that either
$$
  \gamma\otimes\C[R] = \C[G]^{r} \quad \text{or} \quad \gamma\otimes\C[R] = \C[G]^{r}\oplus \C[G/\Cy m]\oplus\gamma\ominus\triv,
$$
depending on whether $R$ is a union of regular orbits or not.
Each $\C[G]$ summand has 1-dimensional $G$-invariants, and their dimensions add up to~$r$,
while
$$
  \dim(\C[G/\Cy m])^G  + \dim\gamma^G-\dim\triv^G
    = 1 + 0 - 1 = 0.
$$
This proves the first claim, and it remains to show that $\dim\epsilon^G=\tilde r$.

Suppose $|R|$ is even, so that $\epsilon = \det V$.
If $m$ is odd, then $G$ has odd order while $\det V$ has rational character by Lemma \ref{abR2}(2),
so $\epsilon=\det V=\triv$.
On the other hand, $R$ has an irregular orbit by (2), and all orbits are of odd size, 
so $r$ is odd. Hence $\tilde r=1=\dim \epsilon^G$. 

If $m$ is even, then $R$ is a union of regular orbits by (2), and
$\det \C[G]=\eta$, the non-trivial character of $\Cy m$ of order 2.
Moreover, $\gamma=\eta$ because $\gamma\ne\triv$ but $\gamma^2=\triv$ by
Lemma \ref{abR2}(1). Therefore
$$
\begin{array}{llllll}
  \epsilon = \det V\!\! &=\!\!& \det(\gamma\otimes(\C[R]\ominus\triv)) =  
    \gamma^{-1}\otimes\det(\gamma\otimes\C[G]^{\oplus r}) \cr
    &=\!\!& \gamma^{-1}\otimes\det(\C[G]^{\oplus r}) = \eta^{r-1},
\end{array}
$$
and so $\dim \epsilon^G = \tilde r$.

Finally suppose $|R|$ is odd, so that $\epsilon=0$ and we need to show that $r$ is even.
By (2), $R$ has an irregular orbit, so
$\gamma^2=\alpha$ by Lemma \ref{abR2}(1), which has
order $m$. Hence $m$ must be odd, as $G$ has no 1-dimensional representation of order $2m$.
Thus every $G$-orbit has odd size, and $r\equiv |R|-1\mod 2$ is even.
\end{proof}

\begin{lemma}
\label{EGA}
If $R$ is even, then $\epsilon=\alpha^{|R|/2}\tensor\gamma^{-1}$.
\end{lemma}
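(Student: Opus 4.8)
The plan is to compute $\epsilon=\det V$ explicitly and match it with $\alpha^{|R|/2}\otimes\gamma^{-1}$, both of which are one-dimensional characters of $G$. Since $|R|$ is even, $\epsilon=\det V$ with $V=\gamma\otimes(\C[R]\ominus\triv)$, and $\C[R]\ominus\triv$ is a genuine representation of dimension $|R|-1$ (the complement of the diagonal in $\C[R]$). Using $\det(\gamma\otimes W)=\gamma^{\dim W}\otimes\det W$, I would first reduce to the identity
$$
  \epsilon=\gamma^{|R|-1}\otimes\det\C[R],
$$
so that everything comes down to the sign character $\det\C[R]:g\mapsto\det(g|_R)$ of the permutation action of $G$ on $R$.

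Next I would show that $\det\C[R]$ factors through $\Cy m=\Gbar/T$. It is trivial on $\kappa$ (which fixes $R$ pointwise), hence factors through $\Gbar$; and it is trivial on $T$, since a non-trivial $t\in T$ acts on $R$ without fixed points and has order $p$, hence as a product of $p$-cycles, which are even because $p\ne 2$ (this is the only place where $\vchar k\ne 2$ is used, and it is vacuous when $T=\{\id\}$). I would then observe that every orbit of $\Cy m$ on $R$ has size $m$ or $1$, with at most one fixed point: if some orbit had stabiliser $H$ with $1\ne H\ne\Cy m$, a non-trivial $h\in H$ would fix all $\ge 2$ points of that orbit (as $H$ is normal in $\Cy m$), contradicting the hypothesis that non-trivial elements of $\Cy m$ have at most one fixed point on $R$; two fixed points are excluded for the same reason. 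For a single $\Cy m$-orbit $O$ one checks $\det\C[O]=\alpha^{\binom{|O|}{2}}$: it is trivial for $|O|=1$, and for $|O|=m$ a generator of $\Cy m$ acts on $O$ as an $m$-cycle, so $\det\C[O]$ and $\alpha^{m(m-1)/2}$ are characters of $\Cy m$ taking the same value $(-1)^{m-1}$ on that generator, hence equal.

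Multiplying over the orbits, and using $\binom{a+b}{2}=\binom{a}{2}+\binom{b}{2}+ab$ together with the fact that any two distinct orbit sizes lie in $\{1,m\}$ with $1$ occurring at most once (so their product is divisible by $m$), I get $\det\C[R]=\alpha^{\binom{|R|}{2}}$, hence $\epsilon=\gamma^{|R|-1}\otimes\alpha^{\binom{|R|}{2}}$. To finish I would use $\gamma^2=\alpha^{|R|}$, which gives $\gamma^{|R|}=\alpha^{|R|^2/2}$, and Lemma \ref{abR2}(1), which says $m$ divides $|R|$ or $|R|-1$, so that $\alpha^{|R|(|R|-1)}=\triv$; a one-line exponent computation then yields $\gamma^{|R|-1}\alpha^{\binom{|R|}{2}}=\gamma^{-1}\alpha^{|R|/2}$, as claimed. (Consistency check: $(\alpha^{|R|/2}\otimes\gamma^{-1})^2=\alpha^{|R|}\gamma^{-2}=\triv$, matching the fact that $\epsilon$ should be a sign character.) The only genuinely fiddly points are keeping the exponents of $\alpha$ straight modulo $m$ in this last step and in the orbitwise identity $\det\C[O]=\alpha^{\binom{|O|}{2}}$; everything else is formal.
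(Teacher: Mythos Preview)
Your proof is correct. Both you and the paper first reduce to the identity $\det\C[R]=\alpha^{|R|(|R|-1)/2}$ (using $\epsilon=\gamma^{|R|-1}\det\C[R]$ and $\gamma^2=\alpha^{|R|}$), and then verify this identity; the difference lies in how the verification is done.

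The paper exploits that both sides are characters of order at most $2$ of $\bar G$, and then does a short case analysis on the $\bar G$-orbit structure of $R$ from Lemma~\ref{abR2}(1) together with the parity of the number $r$ of regular $\bar G$-orbits: it checks separately the cases $|R|\equiv 1\bmod m$, and $R=\bar G^{\amalg r}$ with $r$ even or odd. You instead factor $\det\C[R]$ through $\Cy m$ (using that $T$ acts on $R$ by products of odd $p$-cycles, hence by even permutations), decompose $R$ into $\Cy m$-orbits of size $m$ or $1$, and compute orbit-by-orbit via $\det\C[O]=\alpha^{\binom{|O|}{2}}$ and the additivity identity for $\binom{\cdot}{2}$. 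Your route is more uniform and avoids the case split, at the cost of the small combinatorial bookkeeping; the paper's route avoids that bookkeeping by leaning on the order-$\le 2$ observation. Both implicitly use $p\ne 2$ at the step where $\det\C[R]$ is shown to be trivial on $T$ (equivalently, where the paper asserts both sides are trivial when $m$ is odd), which is harmless since in the application $p=\vchar k\ne 2$.
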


\begin{proof}
We need show that
$
  \gamma^{|R|-1}\det(\C[R]) = \alpha^{|R|/2}\tensor\gamma^{-1}.
$
%
As $R$ is even and 
$\gamma^2=\alpha^{|R|}$ by assumption (cf. Thm. \ref{abrep}), 
this is equivalent to
$$
  \det(\C[R]) = \alpha^{(|R|-1)|R|/2}.
$$
Both sides are rational characters (that is of order 1 or 2) of $\bar G$; 
this is clear for $\det(\C[R])$, and follows from the fact that 
$|R|\equiv 0,1\mod m$ for the right-hand side (Lemma \ref{abR2}), and
$m$ is the order $\alpha$. Moreover, if $|R|\equiv 1\mod m$ then $m$ is odd as $R$ is even,
so both characters are trivial. Therefore we may assume that $R=\C[\bar G]^{\oplus r}$,
a union of $r$ regular orbits (Lemma \ref{abR2} again). If $r$ is even, then the 
left-hand side is trivial, and so is the right-hand side, as 
$|R|/2$ is a multiple of $m$. 

Finally, suppose $r$ is odd and $|R|=rm|T|$, in particular $m$ is even. Let $\eta=\alpha^{m/2}$ 
be the non-trivial character of order 2 of $\bar G$. Both
$\det(\C[R])$ and $\alpha^{(|R|-1)|R|/2}$ are odd powers of $\eta$ in this case 
(see footnote above), and the claim follows.
\end{proof}

\section{Descending morphisms}
\label{sfrob}

In this section we describe how certain morphisms descend to quotients of hyperelliptic curves.
Our motivation comes from the arithmetic of hyperelliptic curves over finite and local fields, 
and the question of how the Frobenius automorphism acts on the quotient curve. 
See \S\ref{sex} for an example.

Let $k$ be a field of characteristic $p\!>\!2$, and let $C/k$ be a hyperelliptic curve.
Let $G\subset \Aut_k C$ be an affine group of automorphisms, given by 
$$
  g(X) = \alpha(g) X+\beta(g), \qquad g(Y)=\gamma(g)\,Y \qquad \qquad (g\in G)
$$
as before, and $C/G$ be the quotient curve given explicitly in Theorem \ref{appmain}.
We say 
that a morphism $\Frob: C\to C$ \emph{normalises} $G$ if for every $g\in G$ there is $g'\in G$ for which
$g\Frob =\Frob g'$.

\begin{theorem}
\label{appfrob}
Suppose $\vchar k=p>2$, $G$ does not
contain the hyperelliptic involution, and
$\Frob: C\to C$ is a morphism of the form 
$$ 
  \Frob\,X=aX^q+b,\quad \Frob\,Y=d\>Y^q  
$$
that normalises $G$, with $q$ a power of $p$.
Then $\Frob$ descends\footnote{that is $\pi\Frob=\Psi\pi$ where $\pi: C\to C/G$ is the quotient map} 
to a morphism $\Psi: C/G\to C/G$
given~by
$$
\begin{array}{llllll}
  \Psi\,x &=& \displaystyle a^{|G|} x^q + \prod_{g\in G} (\alpha(g)b+\beta(g)),\cr
  \Psi\,y &=& \displaystyle \bigleftchoice
     {d\>y^q}{\text{if $\gamma=\triv$,}}
     {a^{\lfloor m/2\rfloor |G|/m}\> d\>y^q}{\text{if $\gamma\ne\triv$,}}
\end{array}
$$
where $m$ is the prime-to-$p$ part of $|G|$, and $x$, $y$ and the model for $C/G$ 
are those of Theorem \ref{appmain}.
\end{theorem}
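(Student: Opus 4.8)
The plan is to first check that $\Phi$ descends to a morphism $\Psi\colon C/G\to C/G$, and then to compute $\Psi$ on the coordinates $x$, $y$ of the model of Theorem~\ref{appmain} by pulling back along the quotient map $\pi\colon C\to C/G$. Since $\Phi$ has the stated form, $a\neq 0$, and the hypothesis that $\Phi$ normalises $G$ says exactly that $g\mapsto g'$, defined by $g\Phi=\Phi g'$, is an automorphism $\tau$ of $G$; equivalently $\Phi g=\tau^{-1}(g)\Phi$ for all $g\in G$. Hence $\pi\Phi g=\pi\Phi$ for all $g\in G$, so $\pi\Phi$ is $G$-invariant and factors uniquely as $\pi\Phi=\Psi\pi$. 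Two facts I will use repeatedly: $\phi\colon X\mapsto aX^q+b$ is a bijection of $\A^1(\bar k)$ normalising the $G$-action, hence it permutes the $G$-orbits and sends regular orbits to regular orbits (stabilisers are transported by $\tau$); and $\phi(X)-\phi(s)=a(X-s)^q$ for every $s\in\bar k$, because $q$ is a power of $p$.

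For the $x$-coordinate, $\pi^* x=I=\prod_{g\in G}g(X)$, and the crux is the polynomial identity
$$
  I(aX^q+b)\;=\;a^{|G|}\,I(X)^{q}+I(b).
$$
I would prove it by choosing a regular $G$-orbit $O$ on $\A^1(\bar k)$ (one exists by Lemma~\ref{autA1}) and setting $O'=\phi(O)$, again regular. Using $\prod_{r\in O}(X-r)=(-1)^{|G|-1}(I-c_O)$ with $c_O=\prod_{r\in O}r$, and the analogous identity for $O'$, from Lemma~\ref{autA1}(6), and then reindexing the product over $O'$ by the bijection $\phi\colon O\to O'$ together with $\phi(X)-\phi(s)=a(X-s)^q$, one gets
$$
  (-1)^{|G|-1}\bigl(I(\phi X)-c_{O'}\bigr)\;=\;a^{|G|}\bigl((-1)^{|G|-1}(I-c_O)\bigr)^{q}.
$$
As $p>2$, the exponent $q$ is odd, so the factor $(-1)^{|G|-1}$ cancels; applying Frobenius on the right and evaluating at $X=0$ (where $\phi(0)=b$ and $I(0)=0$) identifies the additive constant as $I(b)$. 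Transporting this identity through $\pi$ gives $\Psi x=a^{|G|}x^{q}+I(b)=a^{|G|}x^{q}+\prod_{g\in G}(\alpha(g)b+\beta(g))$.

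For the $y$-coordinate there are two cases. If $\gamma=\triv$ then $\pi^* y=Y$ and $\Phi^* Y=dY^{q}$, so $\Psi y=dy^{q}$ immediately. If $\gamma\neq\triv$ then $\pi^* y=(I_T-\lambda)^{\lfloor m/2\rfloor}Y$, and by Lemma~\ref{autA1}(7) and Proposition~\ref{autC} one has $I_T-\lambda=\prod_{r\in U}(X-r)$ with $U=\Xi$ the unique non-regular $G$-orbit, namely the set of fixed points of non-trivial elements of $G$. Since $\phi$ is injective and carries a fixed point of a non-trivial $g$ to a fixed point of the non-trivial $\tau^{-1}(g)$, it permutes $U$; reindexing by $\phi\colon U\to U$ and again using $\phi(X)-\phi(s)=a(X-s)^q$ gives $\Phi^*(I_T-\lambda)=a^{|U|}(I_T-\lambda)^{q}=a^{|G|/m}(I_T-\lambda)^{q}$, since $|U|=|T|=|G|/m$. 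Raising to the power $\lfloor m/2\rfloor$ and multiplying by $\Phi^* Y=dY^{q}$ then yields $\Psi y=a^{\lfloor m/2\rfloor\,|G|/m}\,d\,y^{q}$, matching the stated formula.

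The only real content is these two polynomial identities; once they are in hand, everything else is bookkeeping through $\pi$. The main obstacle is getting them exactly right: keeping track of the constant term $I(b)$ and of the sign $(-1)^{|G|-1}$ — the latter being precisely where the hypothesis $p>2$ (so that $q$ is odd) is used — and making sure the identities are equalities of honest polynomials, which is why one works with a regular orbit rather than the possibly non-regular orbit of $0$.
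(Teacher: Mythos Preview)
Your argument is correct, and the route differs from the paper's in a pleasant way. The paper proves the identity $I(aX^q+b)=a^{|G|}I(X)^q+I(b)$ by writing $\Phi\cdot h(X)=\bigl(h^{(1/q)}(\sqrt[q]{a}\,X+\sqrt[q]{b})\bigr)^q$, observing that the unique $q$th root of a $G$-invariant polynomial is again $G$-invariant, and then invoking the uniqueness clause of Lemma~\ref{autA1}(5) to force $I^{(1/q)}(\sqrt[q]{a}\,X+\sqrt[q]{b})=uI(X)+v$; similarly for $I_T-\lambda$ it uses the uniqueness in Lemma~\ref{autA1}(7). You instead use the factorisation of Lemma~\ref{autA1}(6),(7) together with the pointwise identity $\phi(X)-\phi(s)=a(X-s)^q$ and the fact that $\phi$ permutes regular orbits (resp.\ the unique non-regular orbit $U$), which gives the same identities by a direct product computation without ever extracting $q$th roots of coefficients. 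Your approach is arguably more elementary and makes the geometric reason transparent; the paper's approach is slicker in that it never needs to pick an auxiliary orbit or track the constant $c_{O'}-a^{|G|}c_O^q$ separately. One small remark: your aside that the sign cancellation is ``precisely where $p>2$ is used'' overstates things a little, since in characteristic~$2$ one has $-1=1$ anyway; the real role of $p>2$ is the ambient hyperelliptic framework, not this step.
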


%

\begin{proof}
%
We may assume that $k$ is algebraically closed.
The quotient map $C\to C/G$ 
corresponds to a field inclusion
$$
  k(C/G) = k(x,y) \quad \injects \quad  k(X,Y) = k(C). 
$$
The morphism $\Frob$ preserves $k(C/G)=k(C)^G$ as it normalises $G$, 
so it descends to a morphism $\Psi: C/G\to C/G$. On the level of functions,
$\Psi$ is just the restriction of $\Phi$ to $k(C/G)$.
We now describe the action of $\Psi$ on the generators $x$ and $y$ explicitly.
Note that for every polynomial $h(X)$,
$$
  \Frob\cdot h(X) \!=\! h(aX^q+b) \!=\! 
    h((\sqrt[q]{a}X+\sqrt[q]{b})^q) 
    \!=\! (h^{(1/q)}(\sqrt[q]{a}X+\sqrt[q]{b}))^q,
  \eqno(\dagger)
$$
where $h^{(1/q)}(X)$ denotes the polynomial obtained from $h(X)$ by raising every coefficient to 
the power $1/q$.

%

\smallskip
\noindent
{\bf Action on $x=I(X)$.}
Recall from Theorem \ref{appmain} that $x=I(X)=\prod_{g\in G}g(X)$.
Since it is $G$-invariant, so is 
$$
  \Frob\cdot I(X) = (I^{(1/q)}(\sqrt[q]{a}X+\sqrt[q]{b}))^q.
$$
This has a unique $q$th root, namely $I^{(1/q)}(\sqrt[q]{a}X+\sqrt[q]{b})$, which must 
therefore be $G$-invariant as well.
%
As it has the same degree as $I(X)$, by Lemma \ref{autA1}(5),
$$
  I^{(1/q)}(\sqrt[q]{a}X+\sqrt[q]{b}) = u\, I(X)+v 
$$
for some $u,v\in k$.
Comparing the leading
and the constant coefficients, we see that $u=a^{|G|/q}$ and $v=I^{(1/q)}(\sqrt[q]{b})$. Thus
$$
  \Psi(x) = (ux+v)^q = a^{|G|} x^q + I(b).
$$

\smallskip
\noindent
{\bf Action on $y$.} We have two cases:

{\bf Case $\gamma=\triv$.} Here $y=Y$, and so
%
$$
  \Psi(y) = d\>y^q.
$$

{\bf Case $\gamma\ne\triv$.}
In this case $y=(I_T(X)\!-\!\lambda)^{\lfloor m/2\rfloor}Y$,
where $I_T(X)-\lambda$ is the unique monic polynomial of degree $|G|/m$ 
that is $G$-invariant up to scalars (see Lemma \ref{autA1} (7)).
%
%
Because $\Frob$ normalises $G$,
$$
  g\cdot\Frob\cdot(I_T(X)-\lambda)=\Frob\cdot g'\cdot(I_T(X)-\lambda) 
    = \text{scalar}\cdot \Frob\cdot(I_T(X)-\lambda),
$$
so $\Frob\cdot(I_T(X)-\lambda)$ is also $G$-invariant up to scalars.
By $(\dagger)$, it is a $q$th power,
$$
  \Frob\cdot(I_T(X)-\lambda) = \bigl(I_T^{(1/q)}(\sqrt[q]{a}X\!+\!\sqrt[q]{b})-\sqrt[q]{\lambda}\bigr)^q,
$$
and $I_T^{(1/q)}(\sqrt[q]{a}X\!+\!\sqrt[q]{b})-\sqrt[q]{\lambda}$ must be $G$-invariant up to scalars as well.
But it has the same degree as $I_T(X)$, so by uniqueness we must have
$$
  \Frob\cdot(I_T(X)-\lambda) = \text{constant}\cdot (I_T(X)-\lambda)^q.
$$
Comparing the leading terms, we see that the constant is $a^{\deg I_T(X)}=a^{|G|/m}$.
Hence
%
%
%
%
$$
  \Psi\cdot y = 
    a^{\lfloor m/2\rfloor |G|/m}
    (I_T(X)-\lambda)^{q\lfloor m/2\rfloor} \>d\>Y^q =
    a^{\lfloor m/2\rfloor |G|/m}\>d\>y^q.
$$
\end{proof}

%
%

\section{An example}
\label{sex}

To illustrate the results of this article, let us identify 
the local Galois representation attached to a specific hyperelliptic curve over a local field. 
We will explain how to use apply these methods in greater generality in a forthcoming work 
\cite{hyp, bwgen},
and for the moment will confine ourselves to one example.

Let $Z/\Q_3$ be the hyperelliptic curve of genus 3 given by
$$
  Z:\> y^2=x^8+3^4. 
$$
Let $\zeta$ be a primitive 8th root of 1 and let $\alpha=\sqrt{3\zeta}$ be a root of $x^8+3^4$, so that the other roots are $\zeta^i\alpha$ for $i=1,...,7$. Let
$F=K(\zeta,\alpha)$ be the splitting field of $x^8+3^4$. 
It is a $C_4$-extension of $\Q_3$ with ramification and residue degrees~2, 
so that in particular $\Q_3(\sqrt 3)^\nr=F^\nr$, the maximal unramified 
extension of $F$. Finally, let $\phi\in\Gal(F^\nr/\Q_3(\sqrt{3}))$ 
be the (arithmetic) Frobenius element and let $\tau\in\Gal(F^\nr/\Q_3^\nr)$ 
be the element of order~2. Thus $\phi$ gives a Frobenius element of $F^\nr/\Q_3$ 
and $\tau$ generates its inertia group.

We claim that the Galois action on $H^1(Z)=\H(Z_{\bar \Q_3},\Q_l)\otimes\C$ for $l\neq 3$ factors through $F^\nr/\Q_3$ and that, with respect to a suitable basis,
\begingroup\smaller[3]
$$
\phi^{-1}\mapsto
 \left( \begin{array}{cccccc}
 \sqrt{-3}\!\!\!\! & 0 &0 & 0 & 0 & 0  \\ 
0 & -\sqrt{-3}\!\!\!\! &0 & 0 & 0 & 0\\
0&0&1\!+\!\sqrt2\!\!\!\! &0 & 0 & 0  \\  
0&0&0 & 1\!-\!\sqrt2\!\!\!\!\! & 0 & 0  \\
0&0&0 &0 & -1\!+\!\sqrt2\!\!\!\!\! & 0   \\
0&0&0 &0 & 0 & -1\!-\!\sqrt2\!\!
\end{array} \right)\!\!, \quad
\tau\mapsto
 \left( \begin{array}{cccccc}
1 &0 & 0 & 0 & 0 & 0  \\  
0 & 1 & 0 & 0 & 0 & 0  \\
0 &0 & \rlap{$\!\!\!\text{-}1$} & 0 & 0 & 0  \\
0 &0 & 0 & \rlap{$\!\!\!\text{-}1$} & 0 & 0  \\ 
0 &0 & 0 & 0 & \rlap{$\!\!\!\text{-}1$} & 0  \\ 
0 &0 & 0 & 0 & 0 & \rlap{$\!\!\!\text{-}1$}
\end{array} \right).
$$
\endgroup
In particular, the representation is tamely ramified with conductor exponent~4 and local polynomial $1+3T^2$, so that the Euler factor at $p=3$ of the $L$-series of $Z/\Q$ is $\frac{1}{1+3^{1-2s}}$.



\subsection*{Galois action on the semistable model}

\def\overundersym#1#2#3{
  \begin{array}{c}\scriptstyle #1\\[-4pt]#2\\[-4pt] \scriptstyle #3\end{array}
}

The curve $Z$ acquires good reduction over $F$, since the substitution
$$
s(x)=\alpha x, \qquad s(y)=\alpha^4 y
$$
clearly transforms it to the model $\cC/O_F: y^2=x^8-1$, which has the 8th roots of unity as roots of the right-hand side.
We will write $C$ for its special fibre
$$
  C:\> y^2 = x^8-1  \qquad   {\text{over }}k=\F_9.
$$

The group
$$
  \Gal(F^\nr/\Q_3) = \langle \tau,\phi\rangle \iso \Cy2 \times \hat\Z
$$
acts naturally on $C(\bar k)$ (see \cite{bwgen}).
For an element $\sigma\in \Gal(F^\nr/\Q_3)$ this action is given by the composition
$$
  C(\bar k) 
  \overarrow{\text{lift}} 
  \cC(O_{F^\nr})
  \overarrow{s^{-1}} 
  Z(F^\nr)
  \overarrow{\sigma} 
  Z(F^\nr)
  \overarrow{s} 
  \cC(O_{F^\nr})
  \overarrow{\text{reduce}} 
  C(\bar k).
$$
In our example, 
$$
  \sigma: \quad (x,y)\quad\longmapsto\quad \biggl(\frac{\alpha^\sigma}{\alpha} x^\sigma,\Bigl(\frac{\alpha^\sigma}{\alpha}\Bigr)^4 y^\sigma\biggr)  \quad \mod \frak{m},
$$
where $\frak m$ is the maximal ideal of $F^\nr$.
Observe\footnote{Writing $\zeta_{16}$ for the 16th root of unity $=\frac{\alpha}{\sqrt{3}}$, 
we clearly have $\alpha^\tau=-\sqrt{3}\zeta_{16}=-\alpha$ and 
$\alpha^\phi=\zeta_{16}^3\sqrt{3}=\zeta\alpha$, from the definitions of $\tau$ and $\phi$.}
that $\alpha^\tau=-\alpha$ and $\alpha^\phi=\zeta\alpha$,
so, in particular,
$$
  \tau: (x,y)\mapsto (-x,y) \qquad\text{and}\qquad
  \phi: (x,y)\mapsto (\bar\zeta^{-1} x^3,-y^3), 
$$
where $\bar\zeta$ denotes the image of $\zeta$ in $\F_9$.

Define morphisms $g, \Phi: C\to C$ by the above formulae for $\tau, \phi$ on points. 
By the N\'eron-Ogg-Shafarevich criterion, the natural Galois action on the \'etale cohomology 
group $H^1(Z)$ factors through $\Gal(F^\nr/\Q_3)$. By \cite{bwgen}, 
there is an isomorphism of $\Q_l$-vector spaces
$$
  H^1(Z) \iso H^1(C), \eqno{(\ddagger)}
$$
under which the action of $\tau$ and $\phi$ on $H^1(Z)$ translates to the natural 
geometric action of $g$ and $\Phi$ on $H^1(C)$. 
We are now in a position to apply the results of \S\ref{setalehyp} and \S\ref{sfrob} to explicitly determine the representation $H^1(Z)$.

\subsection*{$H^1(Z)$ as inertia representation}

To determine the action of the inertia group on $H^1(Z)$ we apply Theorem \ref{abmain} to the curve $C$
with the automorphism group $G=\langle g\rangle \iso \Cy2$ (with the action described above) and $\tilde\gamma=\triv$. 
Write $\eta$ for the non-trivial 1-dimensional representation of $G$. 
The roots of $x^8-1$ come in four regular $G$-orbits,
$$
  \{1,-1\}, \>\> \{\bar\zeta,-\bar\zeta\},\>\> \{\bar\zeta^2,-\bar\zeta^2\},\{\bar\zeta^3,-\bar\zeta^3\},
$$
so the theorem shows that, as a $G$-module,
$$
  H^1(Z) \iso H^1(C) \iso 
    \C[G]^{\oplus 4} \ominus \triv \ominus \triv \iso 
    \triv^{\oplus 2} \oplus \eta^{\oplus 4}.
$$
In other words $g$ has eigenvalues $1$ and $-1$ with multiplicities $2$ and $4$, respectively, in its action on $H^1(Z)$, as claimed.

\subsection*{Counting fixed points}
To describe $H^1(Z)$ as a full $\Gal(\bar\Q_3/\Q_3)$-module, we will exploit the identifications
$$
  H^1(Z) \iso H^1(C) \qquad\text{and}\qquad
  H^1(Z)^{\Cy2} \iso 
  H^1(C)^{\Cy2} \iso 
    H^1(C/\Cy2).
$$
To be precise, first note that as $\Phi$ commutes with $g$, it preserves the
$\triv$- and the $\eta$-isotypical components of $\Cy2\!=\!\langle g\rangle$, 
and that $H^1(C)$ is completely determined by the 
eigenvalues of $\Phi$ on them.
(The action of $\Phi$ is known to be semisimple, 
although in our case this will be clear as its eigenvalues will turn out to be distinct.)
By $(\ddagger)$,
the eigenvalues of $\phi$ on the inertia invariants $H^1(Z)^{\langle\tau\rangle}$ agree
with those of $\Phi$ on $H^1(C)^{\Cy2}$. These are, by Theorem \ref{thmcover} (2,3), 
the eigenvalues of $\Psi$ on $H^1(C/\Cy2)$,
where $\Psi$ is the induced morphism on $C/\Cy2$.

%
By Theorem \ref{appmain}, the quotient $C/\Cy 2$ is the genus 1 curve
$$
  C/\Cy2:\> y^2 = (x+1)(x+\bar\zeta^2)(x+\bar\zeta^4)(x+\bar\zeta^6)=x^4-1
$$
and, by Theorem \ref{appfrob}, $\Phi$ descends to 
$$
\begin{array}{cccccccccc}
  \Psi:     & C/\Cy2  & \lar         & C/\Cy2      \cr
           & (x,y)  & \longmapsto  & (-\bar\zeta^{2} x^3,-y^3).  \cr
\end{array}
$$
From the Lefschetz fixed point formula, the inverse characteristic polynomial 
of $\Psi$ on $H^1(C/\Cy2)$ is
$$
  \det\bigl(1-\Psi^{-1}T\bigm|H^1(C/\Cy2)\bigr) = 1 - a T + 3T^2
$$
for some $a\in\Z$,
and its value at $T=1$ is the number of fixed points of $\Psi$ on the curve.
To find it explicitly, first count $\bar\F_3$-solutions to the system
$$
  y^2=x^4-1, \quad x=-\bar\zeta^{2} x^3, \quad y=-y^3.
$$
Starting from the last equation, 
$$
\begin{array}{llllll}
  y=0               &\implies&  x^4=1,\> x=-\bar\zeta^{2} x^3               &\implies& \text{no solutions;} \cr
  y=\pm\bar\zeta^2  &\implies&  x^4=\bar\zeta^4+1=0,\> x=-\bar\zeta^2 x^3 &\implies& x=0,\> y=\pm\bar\zeta^2. \cr
\end{array}
$$
Finally, to see the action on the points at infinity $\infty_\pm$, let  $s=\frac1x, t=\frac{y}{x^2}$. The
equation of the curve becomes
$$
  t^2 = 1-s^4, \qquad \infty_\pm=(0,\pm 1),
$$
and the transformation $\Psi: (x,y) \mapsto (-\bar\zeta^2 x^3,-y^3)$ on this chart is
$$
  s=\frac{1}{x}\mapsto \frac{-1}{\bar\zeta^2 x^3} = \bar\zeta^{2} s^3,  
  \qquad
  t=\frac{y}{x^2}\mapsto \frac{-y^3}{(-\bar\zeta^2 x^3)^2} = \frac{y^3}{x^6} = t^3.
$$
It fixes both $\infty_+$ and $\infty_-$. Overall, $\Psi$ has 4 fixed points, and
its inverse characteristic polynomial on $H^1(C/\Cy2)$ is therefore\footnote{Alternatively, $Z$ has 
visibly good reduction over $\Q_3(\sqrt 3)$, and if we pick a Frobenius element that 
fixes $\sqrt 3$, we end up counting points of the standard Frobenius 
$x\mapsto x^3, y\mapsto y^3$ on $(y^2=x^8+1)/\Cy2\iso y^2=x^4+1$. This has
four fixed points $(0,\pm 1)$ and $\infty_\pm$, recovering the same characteristic
polynomial. Similarly on the full curve $y^2=x^8+1$,
\begin{center}
\tt EulerFactor(Jacobian((HyperellipticCurve([GF(3)|1,0,0,0,0,0,0,0,1]))));
\end{center}
in Magma \cite{Magma} produces $(1+3T^2)(1+2T^2+9T^4)$, as before.}
$
  1+3 T^2.  
$
Hence the eigenvalues of $\Phi$ on this subspace are $\pm\frac{1}{\sqrt{-3}}$, as claimed.

Similarly, counting $a_i$ = the number of fixed points of $\Phi^i$ on $C$ itself, 
we find the sequence to be 
$(4,20,28,92,244,692,...)$. Thus, by the Lefschetz fixed point formula again, 
the inverse characteristic polynomial of $\Phi$ on the full space $H^1(C)$ is
$$
  \exp(\sum_{i\ge 1}\frac{a_i}iT^i)(1-T)(1-3T)  = (1+3T^2)(1+2T^2+9T^4).
$$
The first factor lives, as we have seen, on the $\triv$-component 
of $\Cy2\!=\!\langle g\rangle$, and so the second factor lives on the $\eta$-component. 
In other words, the eigenvalues of $\Phi$ on the $-1$-eigenspace of $g$ on $H^1(C)$ 
are $\pm\frac1{1\pm\sqrt2}$, as claimed.

%
%
%

\begin{acknowledgements}
The second author is supported by a Royal Society University Research Fellowship.
This research is partially supported by \hbox{EPSRC} grants EP/M016838/1 and EP/M016846/1 	 
`Arithmetic of hyperelliptic curves'.
\end{acknowledgements}


\end{document}